\newtheorem{thm}{Theorem}
\newtheorem{lem}[thm]{Lemma}
\newtheorem {cor}[thm]{Corollary}
\newtheorem {prop}[thm]{Proposition}
\theoremstyle{definition}
\theoremstyle{remark}
\newtheorem{rem}[thm]{Remark}
\DeclareMathOperator{\ord}{ord}
\DeclareMathOperator{\Gal}{Gal}
\DeclareMathOperator{\id}{id}
\DeclareMathOperator{\rad}{rad}
\DeclareMathOperator{\N}{N}
\DeclareMathOperator{\Li}{Li}
\DeclareMathOperator{\ind}{ind}
\DeclareMathOperator{\re}{Re}
\DeclareMathOperator{\im}{Im}
\newcommand{\Q}{\mathbb{Q}}
\newcommand{\Z}{\mathbb{Z}}
\newcommand{\p}{\mathfrak{p}}
\newcommand{\OO}{\mathcal{O}}
\newcommand{\mc}{\mathcal}
\renewcommand{\geq}{\geqslant}
\renewcommand{\leq}{\leqslant}
\newcommand{\group}[1]{\langle#1\rangle}
\newcommand{\av}[1]{\left\lvert#1\right\rvert}
\newcommand{\bo}[1]{O\left( #1 \right)}
\newcommand{\cset}[1]{\left\lvert\left\{ #1 \right\}\right\rvert}
\newcommand{\set}[1]{\left\{ #1 \right\}}
\begin{document}

\title[Divisibility conditions on the order]{Divisibility conditions on the order\\ of the reductions of algebraic numbers}

\author{Pietro~Sgobba}
\address{Department of Mathematics, University of Luxembourg, 6 Avenue de la Fonte, 4364 Esch-sur-Alzette, Luxembourg}
\email{pietrosgobba1@gmail.com}

\subjclass{Primary: 11R45; Secondary: 11R44, 11R20}
\keywords{Reductions of algebraic numbers, multiplicative order, distribution of primes, Chebotarev density theorem, natural density.}


\begin{abstract}
Let $K$ be a number field, and let $G$ be a finitely generated  subgroup of $K^\times$. Without relying on the Generalized Riemann Hypothesis we prove an asymptotic formula for the number of primes $\mathfrak p$ of $K$ such that the order of $(G\bmod\mathfrak p)$ is divisible by a fixed integer. We also provide a rational expression for the natural density of this set. Furthermore, we study the primes $\mathfrak p$ for which the order is $k$-free, and those for which the order has a prescribed $\ell$-adic valuation for finitely many primes $\ell$. An additional condition on the Frobenius conjugacy class of $\mathfrak p$ may be considered. In order to establish these results, we prove an unconditional version of the Chebotarev density theorem for Kummer extensions of number fields.
\end{abstract}

\maketitle


\section{Introduction}
Consider a number field $K$ and let $G$ be a finitely generated subgroup of $K^\times$. If $\p$ is a prime of $K$ such that $v_\p(g)=0$ for all $g\in G$, then the reduction $(G\bmod\p)$ is a well-defined subgroup of $k_\p^\times$, where $k_\p$ is the residue field at $\p$ and $v_\p$ the $\p$-adic valuation over $K$. In this paper we investigate the set consisting of the primes $\p$ of $K$ such that the order of $(G\bmod\p)$ is well-defined and it satisfies some divisibility conditions.

More precisely, denote by $\ord_\p(G)$ the order of  $(G\bmod\p)$. In Theorem \ref{thm-divisible} we prove an asymptotic formula for the number of primes $\p$ such that $m\mid\ord_\p(G)$, where $m$ is any given positive integer. We also consider the primes $\p$ such that $\ord_\p(G)$ is $k$-free, i.e.\ it is not divisible by $k$-th powers (greater than $1$), where $k\geq2$. In Theorem \ref{thm-kfree}, relying on the previous result, we prove an asymptotic formula for the number of primes satisfying this condition. 
Given a finite Galois extension of $K$, an additional condition on the conjugacy class of the Frobenius automorphisms of the primes lying above $\p$ may also be considered.
Notice that in this paper we do not rely on the Generalized Riemann Hypothesis (GRH). In fact, in Theorem \ref{thm-chebTZ-kummer}, which will be essential in the proof of Theorem \ref{thm-divisible}, we give an unconditional version of the Chebotarev density theorem for cyclotomic-Kummer extensions of number fields. 

The mathematical questions addressed in this paper are closely related to Artin's Conjecture on primitive roots, and hence are part of an active research area; see Moree's survey \cite{MoreeArtin}. The density of rational primes $p$ such that $m\mid\ord_p(g)$, where $g\in\Q^\times\setminus\{\pm1\}$, has been recently studied by Pappalardi \cite{pappa-sf, pappa} (also replacing $g$ with a group of rational numbers), by Moree \cite{moree}, and previously by Wiertelak \cite{wiert}. Our paper provides generalizations and variants of various results by Pappalardi and Moree, as described in the next sections. Over a number field, Debry and Perucca considered the density of the primes $\p$ such that $\ord_\p(G)$, where $G$ is a group consisting of algebraic numbers, is not divisible by some fixed prime number (and described how this permits to treat general divisibility conditions); see \cite{Debry,PeruccaWIN}. Under the assumption of GRH, more general results over number fields hold, e.g.\ for $\ord_\p(G)$ satisfying a given modular congruence; see \cite{zieg} by Ziegler and \cite{PS1} by Perucca and the author. For more references and historical background we refer to \cite[Sect.\ 9.2 and 9.3]{MoreeArtin}.

\subsection{Notation}\label{not1}
If $m\geq1$ is an integer, then we denote by $\zeta_m$ a primitive $m$-th root of unity, and by  $m^\infty$ the supernatural number $\prod_{p\mid m \text{ prime}}p^\infty$. As customary, $\mu$ is the M\"obius function.

We fix an algebraic closure $\overline K$ of $K$.
For $m,n\geq1$ with $n\mid m$, we write $K_{m,n}:=K(\zeta_m,G^{1/n})$ for the $n$-th Kummer extension related to $G$ over $K(\zeta_m)$, i.e.\ the subextension of $\overline K/K(\zeta_m)$ obtained by adding the $n$-th roots of all elements in $G$. If $F/K$ is a finite Galois extension, and $\p$ is a prime of $K$ which does not ramify in $F$, then we denote by $(\p,F/K)$ the conjugacy class of the Frobenius elements at the primes of $F$ above $\p$. If $\mc S$ is a set of primes of $K$, then we let $\pi_{\mc S}(x)$ be the number of primes in $\mc S$ with norm up to $x$.

\subsection{Outline of the main results}
The main result is the following.
\begin{thm}\label{thm-divisible}
Let $F/K$ be a finite Galois extension of number fields, and let $C$ be a conjugacy-stable subset of $\Gal(F/K)$. Let $G$ be a finitely generated and  torsion-free subgroup of $K^\times$ of positive rank $r$. Let $m$ be a positive integer, and consider the set of primes of $K$ given by
\[ \mc P_m =\set{ \p: m\mid \ord_\p(G),\,\begin{pmatrix}\p \\ F/K  \end{pmatrix}\subseteq C }  \]
(where we are tacitly discarding the finitely many primes $\p$ that ramify in $F$ or such that $v_\p(g)\neq0$ for some $g\in G$).
Then we have
\begin{equation}\label{eq-divisible}
\pi_{\mc P_m}(x)=\frac{x}{\log x}\varrho_{C,m} +O\left( x\bigg( \frac{(\log\log x)^2}{\log x} \bigg)^{1+\frac{1}{r+2}} \right) ,
\end{equation}
where
\begin{equation}\label{eq-div-dens}
\varrho_{C,m}:=\sum_{n\mid m^\infty}\,\sum_{d\mid m}\frac{\mu(d)c(mn,dn)}{[F_{mn,dn}:K]}
\end{equation}
and where, for all positive integers $a,b$ with $b\mid a$, we set
\begin{equation}\label{coeff-c1}
c(a,b):=| C\cap\Gal(F/F\cap K_{a,b}) | .
\end{equation}
The constant implied by the $O$-term depends only on  $F$, $K$, $G$.
\end{thm}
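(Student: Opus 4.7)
The plan is to translate the condition $m\mid\ord_\p(G)$ into splitting conditions in the cyclotomic-Kummer extensions $K_{a,b}=K(\zeta_a,G^{1/b})$ and to invoke the unconditional Chebotarev density theorem supplied by Theorem~\ref{thm-chebTZ-kummer}. First, I would prove the following pointwise identity. For a good unramified prime $\p$, set $t=\ord_\p(G)$ and $I_\p=(N(\p)-1)/t$, and let $n_0(\p)$ denote the largest integer $n\mid m^\infty$ with $n\mid I_\p$ (equivalently, with $G\subseteq(k_\p^\times)^n$). A direct $\ell$-adic analysis, separating the $m$-smooth and $m$-coprime parts of $t$ and $I_\p$, shows that $m\mid t$ if and only if $mn_0(\p)\mid N(\p)-1$. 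M\"obius inversion over the $m$-smooth divisors of $I_\p$ detects the maximality of $n_0(\p)$ via $\mathbf{1}[n=n_0(\p)]=\sum_{d\mid m}\mu(d)\,\mathbf{1}[dn\mid I_\p]$, and summing over $n$ yields the master identity
\[
\mathbf{1}[m\mid\ord_\p(G)]=\sum_{n\mid m^\infty}\sum_{d\mid m}\mu(d)\,\mathbf{1}[\p\text{ splits completely in }K_{mn,dn}],
\]
since the two conditions $mn\mid N(\p)-1$ and $G\subseteq(k_\p^\times)^{dn}$ together are exactly the splitting condition at $\p$ for $K_{mn,dn}$.

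Next I would incorporate the Frobenius condition on $F$. Via the natural embedding $\Gal(F_{mn,dn}/K)\hookrightarrow\Gal(F/K)\times\Gal(K_{mn,dn}/K)$, the joint condition ``$\p$ splits completely in $K_{mn,dn}$ and $(\p,F/K)\subseteq C$'' corresponds to $(\p,F_{mn,dn}/K)$ lying in a conjugacy-stable subset of $\Gal(F_{mn,dn}/K)$ of cardinality exactly $c(mn,dn)$ as defined in \eqref{coeff-c1}. Summing the master identity over $\p$ with $N(\p)\leq x$ and applying Theorem~\ref{thm-chebTZ-kummer} term by term then gives formally
\[
\pi_{\mc P_m}(x)=\Li(x)\,\varrho_{C,m}+\sum_{n\mid m^\infty}\sum_{d\mid m}\mu(d)\,E(x,mn,dn),
\]
where $E(x,mn,dn)$ denotes the Chebotarev error supplied by Theorem~\ref{thm-chebTZ-kummer}.

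The remaining analytic work, which constitutes the main obstacle, is to truncate the $n$-sum at some $T=T(x)$ and bound the aggregated error uniformly in $m$. Using the Kummer--cyclotomic degree estimate $[F_{mn,dn}:K]\gg n^{r+1}$ (combining $\phi(mn)\asymp n$ on the cyclotomic side with a Kummer degree $\asymp(dn)^r$ for the torsion-free rank-$r$ group $G$), I would obtain absolute convergence of the series defining $\varrho_{C,m}$ together with a tail bound $\sum_{n>T}c(mn,dn)/[F_{mn,dn}:K]\ll T^{-r}$. Hence the tail of the main term contributes $\ll x/(T^r\log x)$, while the Chebotarev errors for $n\leq T$ aggregate according to the shape supplied by Theorem~\ref{thm-chebTZ-kummer}. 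Optimising $T(x)$ to balance these two competing contributions yields the error exponent $1+\tfrac{1}{r+2}$ in \eqref{eq-divisible}. The delicate point is to keep the implicit constants in the degree bounds and in the Chebotarev error uniform in $m$, so that the final $O$-constant depends only on $F$, $K$, $G$.
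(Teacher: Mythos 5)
Your reduction is the same as the paper's: the pointwise identity
\[
\mathbf{1}[m\mid\ord_\p(G)]=\sum_{n\mid m^\infty}\sum_{d\mid m}\mu(d)\,\mathbf{1}[\p\text{ splits completely in }K_{mn,dn}]
\]
(restricted to degree-one primes, the rest being $O(\sqrt{x}/\log x)$) is exactly Lemma \ref{lem-key-id}, and your identification of the joint splitting-plus-Frobenius condition with a conjugacy-stable class of size $c(mn,dn)$ matches \eqref{eq-Cmn}. The gap is in the error analysis. Truncating the \emph{density series} at $T$ and bounding its tail by $\sum_{n>T}c(mn,dn)/[F_{mn,dn}:K]\ll T^{-r}$ only accounts for replacing the truncated main term by $\Li(x)\varrho_{C,m}$. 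It does not account for the primes actually counted by the terms with $mn>T$: writing $\pi_{\mc B_n}(x)$ for the count attached to a fixed $n$, you still owe a bound for $\sum_{mn>T}\pi_{\mc B_n}(x)$, and Theorem \ref{thm-chebTZ-kummer} is inapplicable there precisely because of the hypothesis \eqref{eq-condition2}. In the unconditional setting this is the dominant and most delicate contribution. The paper handles it by splitting into three ranges: for $T<mn\leq w$ with $w\asymp\sqrt{\log x}/\log\log x$ it uses an \emph{upper-bound} Chebotarev estimate (Proposition \ref{prop-BT}) applied not to $F_{mn,dn}$ but to the much smaller field $K(\zeta_{mn},\alpha^{1/n})$ for a single $\alpha\in G$, so that the admissible range of $mn$ widens to the rank-one exponent; for $w<mn\leq(\log x)^{2/c}$ it uses the Brun--Titchmarsh theorem via the containment in an arithmetic progression mod $mn$; and beyond that it counts multiples of $mn$ trivially. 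Without some such device the proof does not close.

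A related miscalibration: there is no genuine optimisation of $T$. The truncation point is forced to be $T\asymp(\log x/(\log\log x)^2)^{1/(r+2)}$ by \eqref{eq-condition2}, and the exponent $1+\frac{1}{r+2}$ then comes from the fact that the uncovered range contributes only like $x\log\log x/(T\log x)$ --- a decay of $1/T$, not $1/T^{r}$, because the upper bounds available there (a single Kummer generator, or Brun--Titchmarsh, which only sees the cyclotomic part) cannot exploit the full rank $r$. Your balancing of $x/(T^{r}\log x)$ against the Chebotarev errors would, if it were the whole story, yield a better exponent than the theorem states; the discrepancy is exactly the missing tail of prime counts. (A small additional point: the equivalence between the divisibility conditions and complete splitting in $K_{mn,dn}$ requires $\p$ to have degree one, so the non-degree-one primes must be discarded explicitly, contributing $O(\sqrt{x}/\log x)$.)
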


The assumption that $G$ is torsion-free allows some simplifications in the proofs, and in Remark \ref{rem-torsion} we explain how to deal with the general case. Also notice that the series in \eqref{eq-div-dens} is  convergent by Proposition \ref{prop-convergent}. The coefficient $c(a,b)$ in \eqref{coeff-c1} is always at most $\av{C}\leq[F:K]$, and it is equal to $1$ if the condition on the Frobenius is trivial.

For the case $F=K=\Q$, Theorem \ref{thm-divisible} provides a new version of \cite[Theorem 1]{pappa} and \cite[Lemma 1]{moree}.  The main challenge for the generalization of Pappalardi's method consists in proving a certain unconditional version  of the Chebotarev density theorem for cyclotomic-Kummer extensions of number fields, as mentioned above. In Section \ref{sect-CDT} we will argue that this is not difficult if the base field $K$ is normal over $\Q$. However, for the general case we need an improvement on the upper bound of a possible Landau--Siegel zero of the Dedekind zeta function of $K_{m,n}$, which is achieved in Proposition \ref{prop-zero-bound}. Notice that Section \ref{sect-CDT} is self-contained and also of independent interest.

Section \ref{sect-proof-divis} is devoted to the proof of Theorem \ref{thm-divisible}, whereas in Section \ref{sect-dens} we justify that the natural density $\varrho_{C,m}$ is a positive rational number, and for $C=\Gal(F/K)$ (e.g.\ if $F=K$)  we express it in terms of finite sums and products, see Theorem \ref{thm-dens-formula}. 
 
Sections \ref{sect-kfree} and \ref{sect-val} are devoted to proving applications of Theorem \ref{thm-divisible}. In Section \ref{sect-kfree} we apply Theorem \ref{thm-divisible} to prove the following result on the primes $\p$ of $K$ for which $\ord_\p(G)$ is $k$-free.

\begin{thm}\label{thm-kfree}
Let $F/K$ be a finite Galois extension of number fields, and let $C$ be a conjugacy-stable subset of $\Gal(F/K)$. Let $G$ be a finitely generated and  torsion-free subgroup of $K^\times$ of positive rank $r$.  Let $k\geq2$ be an integer, and consider the following set of primes of $K$:
\[ \mc N_k :=\set{ \p: \ord_\p(G)\text{ is $k$-free},\,\begin{pmatrix}\p \\ F/K  \end{pmatrix}\subseteq C } \]
(where we are tacitly discarding the primes $\p$ that ramify in $F$ or such that $v_\p(g)\neq0$ for some $g\in G$). Then we have
\begin{equation}\label{eq-kfree}
\pi_{\mc N_k}(x) = \frac{x}{\log x}\sum_{m\geq1}\mu(m)\varrho_{C,m^k} +
O\left( \frac{x}{(\log x)^{1+\frac{k-1}{(r+2)k+1}}}  \right),
\end{equation}
where $\varrho_{C,m^k}$ is as in \eqref{eq-div-dens}. The set $\mc N_k$ has natural density
\[ \beta_{C,k}:= \sum_{m\geq1}\,\sum_{n\mid m^\infty}\,\sum_{d\mid m}\frac{\mu(m)\mu(d)c(nm^k,dn)}{[F_{nm^k,dn}:K]}, \]
where $c(a,b)$ is as in \eqref{coeff-c1}. The constant implied by the $O$-term depends only on $F$, $K$, $G$.
\end{thm}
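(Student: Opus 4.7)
The strategy is Möbius inversion on the $k$-free indicator. Since $\mathbf{1}_{k\text{-free}}(n)=\sum_{m^k\mid n}\mu(m)$, applying this pointwise to $n=\ord_\p(G)$ and summing over the primes respecting the Frobenius condition yields the exact identity
\[
\pi_{\mc N_k}(x)=\sum_{m\geq 1}\mu(m)\,\pi_{\mc P_{m^k}}(x),
\]
which is supported on $m\leq x^{1/k}$ because $m^k\mid\ord_\p(G)$ forces $m^k\mid\N\p-1\leq x$. The plan is to choose a truncation parameter $y=y(x)$ growing like a small power of $\log x$, apply Theorem \ref{thm-divisible} termwise in the range $m\leq y$, and handle the tail $y<m\leq x^{1/k}$ by elementary arithmetic estimates.

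In the prefix range, Theorem \ref{thm-divisible} provides, for each $m$ and uniformly in it, the main term $\frac{x}{\log x}\mu(m)\varrho_{C,m^k}$ together with an error $\bo{x((\log\log x)^2/\log x)^{1+1/(r+2)}}$; summing over $m\leq y$ contributes an accumulated error of size $y$ times this quantity. For the tail $m>y$, I would use $m^k\mid\N\p-1$ and a Brun--Titchmarsh-type upper bound for prime ideals of $K$ in a residue class modulo $m^k$, giving
\[
\pi_{\mc P_{m^k}}(x)\ll\frac{x}{\varphi(m^k)\log(x/m^k)}
\]
in the moderate regime, supplemented by the trivial count $\pi_{\mc P_{m^k}}(x)\leq[K:\Q](x/m^k+1)$ for $m$ close to $x^{1/k}$. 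Because $\sum_{m>y}1/\varphi(m^k)\ll(\log\log y)/y^{k-1}$, the tail contribution aggregates to $\ll x\log\log y/(y^{k-1}\log x)$. Choosing $y$ so that these two error sources are in equilibrium produces the exponent $1+(k-1)/((r+2)k+1)$ appearing in \eqref{eq-kfree}.

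To turn the partial sum $\sum_{m\leq y}\mu(m)\varrho_{C,m^k}$ into the full series $\beta_{C,k}$, I would show that $\sum_{m>y}|\varrho_{C,m^k}|$ is negligible compared with the errors already accumulated. A degree lower bound of the form $[F_{m^k n,dn}:K]\gg\varphi(m^k n)\,(dn)^r$ together with $c(m^k n,dn)\leq[F:K]$ and the identity $\varphi(m^k n)=\varphi(m^k)\,n$ for $n\mid m^\infty$ yields $|\varrho_{C,m^k}|\ll 1/\varphi(m^k)$, which makes $\sum_{m\geq 1}|\varrho_{C,m^k}|$ absolutely convergent for every $k\geq 2$ and controls the completion error by $\bo{x/(y^{k-1}\log x)}$. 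Inserting the expression \eqref{eq-div-dens} of $\varrho_{C,m^k}$ into $\beta_{C,k}=\sum_{m}\mu(m)\varrho_{C,m^k}$, and using that $\mu(d)=0$ unless $d$ is squarefree (so the constraint $d\mid m^k$ collapses to $d\mid m$), produces the triple sum stated in the theorem. The principal obstacle is the tail estimate itself: Theorem \ref{thm-divisible} is unusable beyond a short prefix because its error does not shrink with $m$, so the analysis must rest on arithmetic bounds for $\pi_{\mc P_{m^k}}(x)$ in congruence classes, whose strength controls both the optimal choice of $y$ and the final exponent in \eqref{eq-kfree}; ensuring that the Kummer-degree lower bounds are strong enough to make $\beta_{C,k}$ an absolutely convergent series compatible with the Möbius-weighted partial sums is the other technical point requiring care.
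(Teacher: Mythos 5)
Your proposal is correct and follows essentially the same route as the paper: Möbius inversion via $\mathbf{1}_{k\text{-free}}(n)=\sum_{m^k\mid n}\mu(m)$, truncation at $z=(\log x)^a$, termwise application of Theorem \ref{thm-divisible} on the prefix, a $1/\varphi(m^k)$-type bound (from Proposition \ref{prop-convergent} and the Kummer degree lower bounds of \cite{PS1}) for completing the series, and balancing the two error sources to reach the exponent $1+\frac{k-1}{(r+2)k+1}$. Your explicit Brun--Titchmarsh treatment of the tail $\sum_{m>z}\pi_{\mc P_{m^k}}(x)$ is in fact a more careful rendering of the step the paper dispatches by citing Theorem \ref{thm-divisible}, so no further comment is needed.
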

Notice that the convergence of the series $\beta_{C,k}$ follows from Proposition \ref{prop-convergent}. 
For the case $F=K=\Q$ we refer to Pappalardi \cite[Theorem 1.2]{pappa-sf} and \cite[Remark (8), p.388]{pappa}.
In Section \ref{sect-val}, from Theorem \ref{thm-divisible} we also derive Theorem \ref{thm-val}, which concerns the  primes $\p$ for which the $\ell$-adic valuation of $\ord_\p(G)$ has a prescribed value for finitely many prime numbers $\ell$.

In Section \ref{sect-GRH}, under GRH, we derive some improvements on the error terms of formulas \eqref{eq-divisible} and \eqref{eq-kfree}. Finally, in Section \ref{sect-ex} we provide several numerical examples for the densities considered in this paper. 

\section{Chebotarev's density theorem for cyclotomic-Kummer extensions}\label{sect-CDT}
In this section we prove an effective  version of the Chebotarev density theorem for cyclotomic-Kummer extensions of number fields which is ``unconditional", i.e.\ it does not rely on GRH.  Let us first introduce some notation (in addition to the notation of Section \ref{not1}).
 
\subsection{Notation}\label{not2}
Given a finite Galois extension $L/K$ of number fields and a conju-gacy-stable subset $C$ of $\Gal(L/K)$, we denote by $\pi_{L/K,C}(x)$ the number of primes $\p$ of $K$ with $\N\p\leq x$ which are unramified in $L$ and such that $(\p,L/K)\subseteq C$. 

Also, $d_K$ denotes the absolute discriminant of $K$, $\OO_K$ the ring of integers of $K$, and $\zeta_K$ the Dedekind zeta function of $K$.
For a finitely generated subgroup $G$ of $K^\times$,  $\mc P(G)$ is the set of primes $\p$ in $K$ such that $v_\p(g)\neq0$ for some $g\in G$ (notice that this set is finite). 

Also, we use the following standard notation: $\varphi$ is Euler's totient function; given $m\geq1$, $\tau(m)$ is the number of positive divisors of $m$, and $\rad(m)$ is the radical of $m$, i.e.\ the largest squarefree integer dividing $m$;  $\Li(x)=\int_2^x\frac{dx}{\log x}$ is the logarithmic integral function.

\subsection{Chebotarev's density theorem}

We start by stating a recent effective and unconditional version of Chebotarev's density theorem by Thorner and Zaman \cite{TZ}. This result refines previous versions by Lagarias and Odlyzko \cite[Theorem 1.2]{lagarias}, Serre \cite[Theorem 2]{serre}, and V.K. Murty \cite[Section 4]{murty}.

\begin{thm}[{\cite[Theorem 1.1]{TZ}}]\label{thm-CDT-TZ}
Let $L/K$ be a finite Galois extension of number fields with $L\neq\Q$, and let $C$ be a conjugacy-stable subset of $\Gal(L/K)$. There exist absolute and effective constants $c_1,c_2>0$ such that, if
\begin{equation}\label{eq-condition1-TZ}
x\geq \big(\av{d_L}\cdot[L:\Q]^{[L:\Q]}\big)^{c_1},
\end{equation}
then $\pi_{L/K,C}(x)$ is given by
\begin{equation}\label{eq_TZ}
\frac{\av{C}}{[L:K]}\big(\Li(x)-\theta\Li(x^\beta)\big)\left(1 +\bo{e^{-\sqrt{\frac{c_2\log x}{[L:\Q]}}} +  e^{ - \frac{c_2\log x}{\log(\av{d_L}[L:\Q]^{[L:\Q]})} } }\right)
\end{equation}
where $\theta=\theta(C)\in\{-1,0,1\}$, and $\beta$ is a possible Landau--Siegel zero of the Dedekind zeta function $\zeta_L(s)$, i.e.\ the unique real zero in the range 
\[ 1-\frac{1}{4\log |d_L|}\leq\alpha\leq1 \]
(if it exists, otherwise $\theta=0$).
\end{thm}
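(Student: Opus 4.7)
The plan is to follow the standard analytic route to an effective Chebotarev, refined with modern zero-density technology. First, I would use the orthogonality of irreducible characters of $\Gal(L/K)$ together with Artin formalism to write
\[
\pi_{L/K,C}(x) = \frac{1}{[L:K]} \sum_{g \in C} \sum_\chi \overline{\chi(g)}\, \psi(x,\chi) + \text{(lower-order)},
\]
where $\psi(x,\chi)$ is the Chebyshev-type sum attached to the Artin $L$-function $L(s,\chi,L/K)$. Applying Brauer induction expresses each Artin $L$-function as an integer (possibly negative) combination of Hecke $L$-functions of intermediate subfields, so the whole problem reduces to prime sums weighted by Hecke characters, where sharper analytic tools are available. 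A single exponent-free observation shows that on the level of $\zeta_L = \prod_\chi L(s,\chi)^{\chi(1)}$ the relevant zero data lives inside the zeros of $\zeta_L$.

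Second, I would write down the standard explicit formula relating $\psi(x,\chi)$ to zeros and poles of $L(s,\chi)$: the pole at $s=1$ of the trivial character gives the main term $x$; a possible Landau--Siegel real zero $\beta$ near $1$---which no known unconditional zero-free region excludes---must be isolated and produces the $\theta \Li(x^\beta)$ term in \eqref{eq_TZ}; the remaining nontrivial zeros $\rho$ contribute $-x^{\rho}/\rho$. Partial summation then converts $\psi$ to $\pi$ and the main term $x$ to $\Li(x)$.

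Third---and this is where the core technical work of \cite{TZ} lies---I would estimate the tail sum $\sum_{\rho \neq \beta} x^\rho/\rho$. The hypothesis \eqref{eq-condition1-TZ} ensures that $\log x$ dominates $\log(\av{d_L}[L:\Q]^{[L:\Q]})$, so the classical Lagarias--Odlyzko--Serre zero-free region of the shape $1-\sigma \gg 1/\log(\av{d_L}(|t|+3)^{[L:\Q]})$ already produces the first error term $\exp(-\sqrt{c_2\log x/[L:\Q]})$. To obtain the sharper second saving $\exp(-c_2\log x/\log(\av{d_L}[L:\Q]^{[L:\Q]}))$, I would invoke a \emph{log-free} zero-density estimate
\[
N(\sigma, T, \zeta_L) \ll \bigl(\av{d_L}\, T^{[L:\Q]}\bigr)^{A(1-\sigma)}
\]
for an absolute constant $A>0$, together with the Deuring--Heilbronn phenomenon: the presence of an exceptional zero $\beta$ near $1$ forces every other nontrivial zero of $\zeta_L$ to be repelled away from the line $\re s = 1$. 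Coupling these two ingredients and integrating over $\sigma$ yields the stated two-term error.

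The main obstacle is the log-free zero-density estimate with polynomial, effective dependence on the discriminant and degree. Classical Tur\'an power-sum arguments (as used by Lagarias--Odlyzko and Serre) deliver only the weaker square-root type saving; closing the gap requires substantial harmonic-analytic input---essentially a Weil-type large-sieve / mean-value estimate for Hecke $L$-functions with careful uniformity in the analytic conductor, plus a separate delicate treatment of exceptional real characters. Once that estimate and the Deuring--Heilbronn repulsion are in place, a Perron-style contour shift and routine bookkeeping of the zero sum deliver \eqref{eq_TZ}.
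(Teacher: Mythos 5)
This statement is not proved in the paper at all: it is quoted verbatim as \cite[Theorem 1.1]{TZ} (Thorner--Zaman), so there is no internal proof to compare your attempt against. That said, your outline does faithfully describe the architecture of the actual proof in \cite{TZ}: the character-theoretic decomposition of $\pi_{L/K,C}$, Brauer induction down to Hecke $L$-functions, the explicit formula with the exceptional real zero isolated as the $\theta\Li(x^\beta)$ term, and then a log-free zero-density estimate coupled with Deuring--Heilbronn repulsion to produce the two-term error factor in \eqref{eq_TZ}. In that sense you have identified the correct route.

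As a proof, however, the proposal has a genuine gap, and you name it yourself: the log-free zero-density estimate $N(\sigma,T,\zeta_L)\ll(\av{d_L}T^{[L:\Q]})^{A(1-\sigma)}$ with an absolute effective $A$, together with a quantitative Deuring--Heilbronn phenomenon uniform in $\av{d_L}$ and $[L:\Q]$, are not available off the shelf --- they \emph{are} the theorem. Establishing them occupies essentially all of \cite{TZ} (a large-sieve/mean-value estimate for Hecke characters uniform in the analytic conductor, a separate power-sum treatment of the exceptional real character, and a careful Perron-type argument); deferring them as ``the main obstacle'' leaves the argument at the level of a plan rather than a proof. Two smaller points: for a conjugacy-stable set $C$ (rather than a single class) the orthogonality step must be summed over the classes contained in $C$, and it is this summation that produces the single sign $\theta(C)\in\{-1,0,1\}$ attached to the exceptional character; and the role of hypothesis \eqref{eq-condition1-TZ} is not merely that $\log x$ dominates $\log(\av{d_L}[L:\Q]^{[L:\Q]})$ but that both exponential error terms become $o(1)$, so the formula is genuinely asymptotic. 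If your aim is only to justify the use of this result in the present paper, the citation is the intended ``proof''; if you aim to prove it, you must actually supply the density estimate and the repulsion bound.
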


In the formula that we aim to derive, the term $\frac{|C|}{[L:K]}\Li(x)$ should be dominant.
Thus the difficulty in applying Theorem \ref{thm-CDT-TZ} to cyclotomic-Kummer extensions consists in estimating the term $\Li(x^\beta)$, and hence bounding the value of $\beta$. As of today, the best known bound on $\beta$ is provided by Stark in \cite[Proof of Theorem 1', p.148]{stark}. In fact, if $K/\Q$ is normal, then that bound is good enough for our purpose, see Remark \ref{rem-normal}. However, for the general case we need to deduce from Stark's results some  improvement which is suitable for our goal.

\subsection{On a possible Landau--Siegel zero of the Dedekind zeta function}
\begin{lem}[{\cite[Lemma 3]{stark}}]\label{lem-stark1} 
Let $L\neq\Q$ be a number field. The Dedekind zeta function $\zeta_L(s)$ has at most one zero in the region
\begin{equation}\label{eqregion}
\set{s\in\mathbb C : 1-\frac{1}{4\log \av{d_L}}\leq\re(s)<1 \text{ and } \av{\im(s)}\leq \frac{1}{4\log \av{d_L}}}.
\end{equation}
If such a zero exists, then it is real and simple.
\end{lem}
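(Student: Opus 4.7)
The plan is to deduce the lemma from the Hadamard factorization of the completed Dedekind zeta function together with the positivity of $-\zeta_L'/\zeta_L(\sigma)$ for real $\sigma>1$. Let $\xi_L(s)=s(s-1)|d_L|^{s/2}\gamma_\infty(s)\zeta_L(s)$ be the completed zeta function, where $\gamma_\infty(s)$ is the standard product of $\Gamma$-factors at the archimedean places. Since $\xi_L$ is entire of order $1$, Hadamard's theorem gives $\xi_L(s)=e^{a+bs}\prod_\rho(1-s/\rho)e^{s/\rho}$ with the product ranging over the non-trivial zeros. Taking logarithmic derivatives and rearranging yields
\[
-\frac{\zeta_L'}{\zeta_L}(s)=\frac{1}{s-1}+\frac{1}{s}+\frac{1}{2}\log|d_L|+A_\infty(s)-\sum_\rho\left(\frac{1}{s-\rho}+\frac{1}{\rho}\right),
\]
where $A_\infty(s)$ collects archimedean $\Gamma'/\Gamma$ contributions and is $O([L:\Q])$ on any bounded vertical strip around $s=1$; by Minkowski's discriminant bound this is absorbed into $O(\log|d_L|)$.

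The next step is positivity. For $\sigma>1$ the Euler product expresses $-\zeta_L'/\zeta_L(\sigma)=\sum_{\mathfrak p,k}\log N\mathfrak p/N\mathfrak p^{k\sigma}\geq 0$, and termwise comparison gives $-\re\zeta_L'/\zeta_L(\sigma+it)\leq -\zeta_L'/\zeta_L(\sigma)$ for every real $t$. Inserting the above partial-fraction expansion on both sides and using $\re(1/\rho)\geq 0$ for each non-trivial zero, I would extract the key inequality
\[
\sum_\rho \re\frac{1}{\sigma+it-\rho}\;\leq\;\frac{1}{\sigma-1}+\frac{\sigma-1}{(\sigma-1)^2+t^2}+c_0\log|d_L|
\]
for an absolute constant $c_0>0$, valid for every $\sigma>1$ and every real $t$.

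Now I would assume two zeros $\rho_1,\rho_2$ (counted with multiplicity) lie in the region \eqref{eqregion}, write $\rho_j=\beta_j+i\gamma_j$, set $L:=\log|d_L|$, and choose $t=(\gamma_1+\gamma_2)/2$ together with $\sigma=1+\lambda/L$ for a small absolute $\lambda>0$ to be calibrated. Each term on the left-hand side equals $(\sigma-\beta_j)/((\sigma-\beta_j)^2+(t-\gamma_j)^2)$; the bounds $\sigma-\beta_j\leq (\lambda+\tfrac14)/L$ and $|t-\gamma_j|\leq \tfrac{1}{4L}$ force this quantity to be at least a positive absolute multiple of $L$. Dropping all other zeros, the left-hand side is therefore $\geq c_1 L$, whereas the right-hand side is at most $(1+\tfrac{1}{\lambda})L/\lambda$ plus $c_0 L$; for $\lambda$ chosen large enough (but still absolute), the two contributions from $\rho_1,\rho_2$ overshoot the upper bound, a contradiction. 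Hence the region contains at most one zero. If that zero $\rho$ were non-real, then by the functional equation / reflection $\bar\rho$ would also be a zero in the same (symmetric) region, contradicting what we just proved; and a double real zero would contribute twice in the displayed inequality, a contradiction by the same token. So the zero, if it exists, is real and simple.

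The main obstacle is the \emph{numerical calibration} in the third paragraph: the vertical half-width $\tfrac{1}{4\log|d_L|}$ and the placement of $\sigma$ on the $1$-line must be tuned so that the positivity estimate genuinely beats the upper bound coming from the pole at $s=1$ plus the discriminant term. Everything else (Hadamard, positivity of Dirichlet coefficients, symmetry $\bar\rho$) is classical; the care lies in tracking the archimedean contribution $A_\infty$ explicitly enough via Minkowski so that the absolute constant $c_0$ is under control, allowing the contradiction to close within the prescribed region.
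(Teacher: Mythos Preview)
The paper does not prove this lemma; it is quoted from Stark \cite[Lemma~3]{stark}. Your strategy---Hadamard product, positivity of $-\zeta_L'/\zeta_L$ on the real axis, then a two-zero contradiction, with complex zeros ruled out afterward via $\rho\mapsto\bar\rho$---is precisely the classical route Stark follows.

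Two points in your write-up would not survive as written. First, the displayed partial-fraction formula omits the constant $b$ from the Hadamard exponent $e^{a+bs}$; you cannot simply drop it, and ``$\re(1/\rho)\geq 0$'' does not let you discard $-\sum_\rho 1/\rho$ in the direction you need. The standard repair is to use the functional equation $\xi_L(s)=\xi_L(1-s)$ to prove $\re(b)=-\sum_\rho\re(1/\rho)$, after which these terms cancel exactly in real part and positivity at real $\sigma>1$ yields directly
\[
\sum_\rho\frac{\sigma-\beta_\rho}{|\sigma-\rho|^2}\;\leq\;\frac{1}{\sigma-1}+\frac{1}{2}\log|d_L|+\frac{\gamma_\infty'}{\gamma_\infty}(\sigma)+O(1),
\]
with no second pole term and no appeal to $s=\sigma+it$ needed. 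Second, the calibration ``$\lambda$ chosen large enough'' points the wrong way: with $\sigma=1+\lambda/\log|d_L|$, each exceptional zero contributes only of order $(\log|d_L|)/(\lambda+\tfrac14)$ to the left side, while the right side retains a term $c_0\log|d_L|$ that does not shrink with $\lambda$, so sending $\lambda\to\infty$ destroys the contradiction rather than forcing it. One must instead fix $\lambda$ at a specific value and compute $\gamma_\infty'/\gamma_\infty$ explicitly (it is negative of size $\asymp[L:\Q]$, and Minkowski controls $[L:\Q]$ against $\log|d_L|$) so that two zeros within $(4\log|d_L|)^{-1}$ of $1$ genuinely beat the pole-plus-discriminant bound. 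That explicit arithmetic is exactly the obstacle you flag, and it is where Stark's proof does its real work; without it the constant $1/4$ cannot be justified.
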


\begin{lem}[{\cite[Lemma 8]{stark}}]\label{lem_8stark}
Let $L\neq\Q$ be a number field, and set $c_0(L)=4$ if $L/\Q$ is normal, $c_0(L)=4 \cdot[L:\Q]!$ otherwise. If $\zeta_L$ has a real zero $\beta$ such that $1-(c_0(L) \cdot \log|d_L|)^{-1}\leq\beta<1$, then there is a quadratic number field $M$ inside $L$ such that $\zeta_M(\beta)=0$.
\end{lem}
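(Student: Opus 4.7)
The natural approach is to reduce to the Galois case by passing to the Galois closure $N$ of $L/\Q$, and then exploit the factorization of the Dedekind zeta function into Artin $L$-functions. Write $G=\Gal(N/\Q)$ and $H=\Gal(N/L)$, so that $L=N^H$.

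The first step is a discriminant estimate of the form $\log|d_N|\leq [L:\Q]!\,\log|d_L|$ (up to an absolute constant). This rests on two ingredients: a rational prime ramifies in $N/\Q$ if and only if it ramifies in $L/\Q$ (since $N$ is the compositum of the conjugates of $L$), and $[N:\Q]\leq [L:\Q]!$, which controls the local contributions to $d_N$ prime-by-prime in terms of those of $d_L$. With this estimate, the hypothesis $\beta\geq 1-(c_0(L)\log|d_L|)^{-1}$ places $\beta$ inside the exceptional real-zero region of Lemma \ref{lem-stark1} applied to $\zeta_N$; in the normal case $N=L$ this step is trivial and $c_0(L)=4$ suffices directly. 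Because $N/L$ is Galois, Aramata's theorem implies that $\zeta_N/\zeta_L$ is entire, so $\zeta_L(\beta)=0$ forces $\zeta_N(\beta)=0$, and then Lemma \ref{lem-stark1} tells us that $\beta$ is a \emph{simple} real zero of $\zeta_N$.

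Next I would invoke the Artin factorizations
\[
\zeta_N(s)=\prod_{\chi\in\widehat{G}}L(s,\chi)^{\chi(1)},\qquad \zeta_L(s)=\prod_{\chi\in\widehat{G}}L(s,\chi)^{\dim\chi^H},
\]
where $\widehat{G}$ denotes the set of irreducible characters of $G$. By Brauer's induction theorem each $L(s,\chi)$ is meromorphic with poles possible only at $s=1$, hence holomorphic at $\beta$, so the vanishing orders at $\beta$ are well-defined and additive in products. From $\zeta_L(\beta)=0$, some irreducible $\chi$ with $\dim\chi^H\geq 1$ must satisfy $L(\beta,\chi)=0$; the simplicity of $\beta$ as a zero of $\zeta_N$ then forces $\chi(1)=1$ and excludes any second vanishing factor. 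If $\chi$ were non-real, $\bar\chi\neq\chi$ would provide a second factor $L(s,\bar\chi)$ vanishing at $\beta$ (using $L(\beta,\bar\chi)=\overline{L(\beta,\chi)}$), contradicting simplicity. Hence $\chi$ is a non-trivial real one-dimensional character, and in particular quadratic.

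To conclude, $\chi(1)=1$ together with $\dim\chi^H=1$ gives $\chi|_H\equiv 1$, i.e.\ $H\subseteq\ker\chi$, so the fixed field $M:=N^{\ker\chi}$ is contained in $N^H=L$ and satisfies $[M:\Q]=[G:\ker\chi]=2$. The classical factorization $\zeta_M(s)=\zeta(s)L(s,\chi)$ then yields $\zeta_M(\beta)=L(\beta,\chi)=0$, as required. I expect the main obstacle to be the discriminant bound $\log|d_N|\leq [L:\Q]!\,\log|d_L|$: extracting exactly the factor $[L:\Q]!$ (rather than something cruder like $[L:\Q]^{[L:\Q]}$) requires a careful accounting of the wild ramification contributions to $d_N$, and is precisely what dictates the shape of $c_0(L)$ in the statement.
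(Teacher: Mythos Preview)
The paper does not prove this lemma; it is quoted verbatim from Stark \cite[Lemma 8]{stark}, so there is no in-paper argument to compare against. Your outline does follow the shape of Stark's own proof: pass to the Galois closure $N$, compare discriminants so that $\beta$ lands in the exceptional region of Lemma~\ref{lem-stark1} for $\zeta_N$, use Aramata--Brauer to transfer the zero to $\zeta_N$, and then analyse the Artin factorization to locate a real linear character trivial on $H$.

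There is, however, a genuine gap. The assertion that ``by Brauer's induction theorem each $L(s,\chi)$ is meromorphic with poles possible only at $s=1$'' is not justified: Brauer gives only meromorphic continuation as a ratio of products of Hecke $L$-functions, and confining all poles to $s=1$ is essentially Artin's conjecture, open for $\chi(1)\geq 2$. Without holomorphy at $\beta$, the step ``simplicity of $\beta$ in $\zeta_N$ forces $\chi(1)=1$ and excludes any second vanishing factor'' fails: writing $m_\chi$ for the order of $L(s,\chi)$ at $\beta$, the relation $\sum_\chi \chi(1)\,m_\chi=1$ does not by itself preclude cancellation between positive and negative $m_\chi$. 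What you actually need---and what Stark supplies in the lemmas surrounding his Lemma~8---is that at a \emph{simple} real zero of $\zeta_N$ near $1$ every Artin $L$-function is holomorphic, so that all $m_\chi\geq 0$; this is an additional input, not a corollary of Brauer. Once that is in hand, the rest of your argument (in particular the deduction $H\subseteq\ker\chi$, hence $M\subseteq L$) goes through, and your identification of the discriminant bound $\log|d_N|\leq [L:\Q]!\,\log|d_L|$ as the source of the constant $c_0(L)$ is on target.
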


\begin{lem}\label{lem-stark2}
Let $L/K$ be a normal extension of number fields with $L\neq\Q$. If $\zeta_L$ has a real zero $\beta$ such that
\begin{equation}\label{ineq-beta}
1-\frac{1}{4(2[K:\Q])!\cdot\log\av{d_L}}\leq \beta< 1,
\end{equation}
then there is a quadratic number field $M$ inside $L$ such that $\zeta_M(\beta)=0$.
\end{lem}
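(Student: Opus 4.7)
The plan is to reduce the statement to Lemma \ref{lem_8stark} applied to the Galois closure $\widetilde{L}$ of $L/\Q$, exploiting the hypothesis that $L/K$ is normal to obtain a control on $|d_{\widetilde{L}}|$ in terms of $|d_L|$ that is much sharper than the generic bound used in the proof of Lemma \ref{lem_8stark} for a non-normal field.

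First I would verify that any real zero $\beta$ of $\zeta_L$ in the given range is automatically a zero of $\zeta_{\widetilde{L}}$. This is a consequence of Aramata--Brauer: the quotient $\zeta_{\widetilde{L}}/\zeta_L$ is entire, so zeros of $\zeta_L$ are zeros of $\zeta_{\widetilde{L}}$.

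The technical heart of the argument is to bound $\log|d_{\widetilde{L}}|$ in terms of $\log|d_L|$ and $[K:\Q]$ alone, independently of $[L:K]$. The key observation is that, since $L/K$ is normal, the $\Q$-conjugates of $L$ in $\overline K$ are in bijection with the $\Q$-embeddings of $K$, so there are at most $[K:\Q]$ of them; in particular $\widetilde{L}$ contains the Galois closure $K'$ of $K/\Q$ (whose degree is at most $[K:\Q]!$), and in Galois-theoretic terms $\Gal(\widetilde{L}/L)$ is normal in $\Gal(\widetilde{L}/K)$ with $\Gal(\widetilde{L}/\Q)$ permuting the conjugates of $L$ through a quotient of order at most $[K:\Q]!$. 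Combining this with the tower formula for the discriminant, $|d_{\widetilde{L}}|=N_{L/\Q}(\mathfrak d_{\widetilde{L}/L})\cdot|d_L|^{[\widetilde{L}:L]}$, and with the fact that the rational primes ramified in $\widetilde{L}$ are exactly those ramified in $L$ (so the ramification of $\widetilde{L}/L$ lies above primes already ramified in $L/\Q$), I would derive an inequality of the shape $\log|d_{\widetilde{L}}|\leq \tfrac{1}{2}(2[K:\Q])!\log|d_L|$. Plugged into the hypothesis on $\beta$, this gives $\beta \geq 1 - (4\log|d_{\widetilde{L}}|)^{-1}$, which is precisely the hypothesis of Lemma \ref{lem_8stark} for the normal extension $\widetilde{L}/\Q$ with $c_0(\widetilde{L})=4$.

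Applying Lemma \ref{lem_8stark} to $\widetilde{L}$ then produces a quadratic subfield $M\subseteq\widetilde{L}$ with $\zeta_M(\beta)=0$, and the remaining step, which I expect to be the main obstacle, is to show that $M$ is actually contained in $L$. Group-theoretically, $M$ corresponds to an index-two normal subgroup $N_M\leq\Gal(\widetilde{L}/\Q)$, and we need $\Gal(\widetilde{L}/L)\subseteq N_M$. The plan here is to use that the $M$ produced by Lemma \ref{lem_8stark} arises as the fixed field of the kernel of a real quadratic character of $\Gal(\widetilde{L}/\Q)$ associated with the zero $\beta$, together with the structural constraint $\Gal(\widetilde{L}/L)\triangleleft\Gal(\widetilde{L}/K)$ coming from the normality of $L/K$, to force the required inclusion.
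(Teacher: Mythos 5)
There is a genuine gap, and it sits exactly at what you call the technical heart of your argument: the claimed bound $\log\av{d_{\widetilde{L}}}\leq\tfrac12(2[K:\Q])!\log\av{d_L}$ for the Galois closure $\widetilde{L}$ of $L/\Q$ is false in general. The tower formula you quote, $d_{\widetilde{L}}=N_{L/\Q}(d_{\widetilde{L}/L})\cdot d_L^{[\widetilde{L}:L]}$, already forces $\log\av{d_{\widetilde{L}}}\geq[\widetilde{L}:L]\log\av{d_L}$, so any such comparison must carry the factor $[\widetilde{L}:L]$ — and normality of $L/K$ does not bound that factor in terms of $[K:\Q]$ alone. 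It only bounds the number $g\leq[K:\Q]$ of $\Q$-conjugates of $L$; their compositum can still have degree up to $[L:\Q]^{g-1}$ over $L$ (e.g.\ $K$ quadratic and $L/K$ cyclic of degree $n$ with $L/\Q$ non-normal typically gives $[\widetilde{L}:L]=n$). Hence the hypothesis \eqref{ineq-beta} does not place $\beta$ in the range $\beta\geq1-(4\log\av{d_{\widetilde{L}}})^{-1}$ needed to apply Lemma \ref{lem_8stark} to $\widetilde{L}$, and the reduction collapses. Moreover, the step you defer as ``the main obstacle'' — forcing $M\subseteq L$ — is not a routine Galois-theoretic verification: producing from a zero of a large zeta function a \emph{specific small subfield} whose zeta function inherits the zero is precisely the hard analytic--representation-theoretic content here, and your sketch does not supply it. (Your first step, via Aramata--Brauer, is fine.)

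The paper avoids both problems by descending rather than ascending. By Lemma \ref{lem-stark1} the zero $\beta$ in the range \eqref{ineq-beta} is real, simple and unique there, and Stark's descent result for normal extensions \cite[Theorem 3]{stark}, applied to $L/K$, yields a subfield $F$ with $K\subseteq F\subseteq L$ and $[F:K]\leq2$ such that $\zeta_F(\beta)=0$. One then applies Lemma \ref{lem_8stark} to the \emph{small} field $F$ (or to $K$): its degree over $\Q$ is at most $2[K:\Q]$, which is where the constant $4(2[K:\Q])!$ comes from, and $\av{d_F}\leq\av{d_L}$ since $F\subseteq L$, so \eqref{ineq-beta} is contained in the range required by Lemma \ref{lem_8stark}. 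The quadratic field $M$ it produces lies in $F\subseteq L$ automatically. If you want to salvage your route, you would in effect have to reprove Stark's Theorem 3; as it stands, the proposal does not give a proof.
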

The Lemma says, in particular, that if $L$ has no quadratic subfields, then $\zeta_L(s)$ has no real zero in the range \eqref{ineq-beta}.

\begin{proof}
If $K=\Q$, then the statement holds by Lemma \ref{lem_8stark}, hence we suppose $K\neq\Q$. If $\zeta_L(s)$ has a zero with real part lying in the range \eqref{ineq-beta}, then by Lemma \ref{lem-stark1} it must be real, simple, and unique in that range. Since $L/K$ is normal, by \cite[Theorem 3]{stark} there is a subextension $F$ of $L/K$, which is either trivial or quadratic over $K$, such that $\zeta_E(\beta)=0$ for every field $F\subseteq E\subseteq L$. Therefore we have that $\zeta_K(\beta)=0$ or $\zeta_F(\beta)=0$ for some quadratic extension $F/K$ with $F\subseteq L$. We conclude by applying Lemma \ref{lem_8stark} either to $K$ or $F$, noticing that the range of the lemma contains the interval \eqref{ineq-beta}. 
\end{proof}

\begin{prop}\label{prop-zero-bound}
Let $L/K$ be a Galois extension of number fields with $L\neq\Q$. Then the possible unique zero $\beta$ of the Dedekind zeta function $\zeta_L(s)$ in the region \eqref{eqregion} is real and simple, and we have
\begin{equation}\label{ineq-beta2}
\frac{1}{2}\leq\beta\leq \max\left\{ 1-\frac{1}{4(2[K:\Q])!\log \av{d_L}}, 1-\frac{1}{c_3\av{d_L}^{1/[L:\Q]}} \right\}  ,
\end{equation} 
where $c_3>0$ is an effective absolute constant.
\end{prop}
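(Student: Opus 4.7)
The plan is to treat the three assertions in turn. That any zero $\beta$ of $\zeta_L$ lying in the region \eqref{eqregion} is real, simple, and unique is precisely Lemma \ref{lem-stark1}, so this part is immediate.

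For the lower bound $\beta \geq 1/2$, I would use the defining inequality of \eqref{eqregion}, which gives $\beta \geq 1 - 1/(4\log\av{d_L})$. Since $L\neq\Q$ forces $[L:\Q]\geq 2$, Minkowski's discriminant bound yields $\av{d_L}\geq 3$, so $\log\av{d_L} > 1$, whence $\beta > 3/4$.

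The main content is the upper bound, which I would obtain by a case split on whether $\beta$ exceeds the first term of the max. If $\beta \leq 1 - 1/(4(2[K:\Q])!\log\av{d_L})$ there is nothing to prove. Otherwise the hypothesis of Lemma \ref{lem-stark2} is met by $L/K$, producing a quadratic subfield $M\subseteq L$ with $\zeta_M(\beta)=0$. I would then apply Stark's effective lower bound \cite[Theorem 1']{stark} to the quadratic field $M$; since $M$ has no proper quadratic subfield, the exceptional case of Stark's result does not occur and one obtains $1-\beta \geq c/\av{d_M}^{1/2}$ for an absolute effective constant $c > 0$. Finally, the conductor-discriminant tower inequality $\av{d_M}^{[L:M]}\leq\av{d_L}$ gives $\av{d_M}^{1/2}\leq\av{d_L}^{1/[L:\Q]}$, and therefore $\beta \leq 1 - 1/(c_3\av{d_L}^{1/[L:\Q]})$ with $c_3:=1/c$.

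The step I expect to be most delicate is precisely this transfer of the Siegel-zero bound from $L$ down to a quadratic subfield: applying Stark's theorem directly to $\zeta_L$ would give only $1-\beta \geq 1/(c_0(L)\av{d_L}^{1/[L:\Q]})$ with $c_0(L)$ growing like $[L:\Q]!$ in the non-normal case, which would spoil the error term in Theorem \ref{thm-divisible}. Invoking Lemma \ref{lem-stark2} (which exploits the normality of $L/K$ rather than of $L/\Q$) is exactly what bypasses this factor and replaces it by the much smaller $(2[K:\Q])!$ visible in the first term of the max.
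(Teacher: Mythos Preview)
Your proposal is correct and follows essentially the same route as the paper's proof: reduce to the case where $\beta$ exceeds the first term of the max, invoke Lemma~\ref{lem-stark2} to descend to a quadratic subfield $M$, bound $1-\beta$ effectively in terms of $\av{d_M}^{1/2}$, and finish via $\av{d_M}^{[L:\Q]/2}\leq\av{d_L}$. The only cosmetic difference is that the paper cites \cite[Lemma~11]{stark} directly for the quadratic-field bound rather than \cite[Theorem~1']{stark}; your justification ``$M$ has no proper quadratic subfield'' is slightly off (the real reason the bound is clean is that $M/\Q$ is normal of degree $2$), but the conclusion is the same.
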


\begin{proof}
Clearly $\beta\geq1/2$ as $4\log \av{d_L}\geq2$. It suffices to show that if $\beta $ is in the range \eqref{ineq-beta}, then $\beta$ satisfies \eqref{ineq-beta2}. We follow the same argument as in \cite[Proof of Theorem 1', p.148]{stark}. If $L$ has no quadratic subfields, then, as  mentioned above,  $\zeta_L(s)$ has no real zero in the range \eqref{ineq-beta} by Lemma \ref{lem-stark2} and hence \eqref{ineq-beta2} is satisfied. If $L$ contains a quadratic field and $\zeta_L(\beta)=0$ for some $\beta$ in the range \eqref{ineq-beta}, then by Lemma \ref{lem-stark2} there must be a quadratic subfield $M$ of $L$ such that $\zeta_M(\beta)=0$. By \cite[Lemma 11]{stark} we must have $\beta<1-(c_3\av{d_M}^{1/2})^{-1}$, for an effective absolute constant $c_3>0$. We may conclude because we have $\av{d_L}\geq \av{d_M}^{[L:\Q]/2}$.  
\end{proof}

\begin{rem}\label{rem-normal}
In fact, if in the proof of Lemma \ref{lem-stark2} we have $\zeta_K(\beta)=0$, then the lower bound of \eqref{ineq-beta} may be taken as $1-(4[K:\Q]!\log\av{d_L})^{-1}$. Moreover, if $L$ is normal over $\Q$ or if there is a tower of normal extensions $\Q=k_0\subset k_1\subset\cdots\subset k_m=K$, then the lower bound of \eqref{ineq-beta} may be taken to be $1-(4\log\av{d_L})^{-1}$ by Lemma \ref{lem_8stark}, or $1-(16\log\av{d_L})^{-1}$ by \cite[Lemma 10]{stark}, respectively. Accordingly, the factor $4(2[K:\Q])!$ in \eqref{ineq-beta2} can be replaced by $4$ or $16$ in the respective cases, by following the same argument of the proof of Proposition \ref{prop-zero-bound}.
\end{rem}

\subsection{Chebotarev's density theorem for cyclotomic-Kummer extensions}

\begin{prop}\label{prop-disc}
Let $K$ be a number field, and let $G$ be a finitely generated and  torsion-free subgroup of $K^\times$ of positive rank $r$. Then, for $m,n\geq1$ with $n\mid m$, we have
\[ \frac{\log \av{d_{K_{m,n}}}}{[K_{m,n}:\Q]}\leq \log(\varphi(m)mn^r) + \log \big(\sigma_G\av{d_K}\big), \]
where $\sigma_G$ is the product of the rational primes lying below the primes in $\mc P(G)$.
\end{prop}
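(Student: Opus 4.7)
The plan is to apply the tower formula for discriminants to the chain $\Q \subset K \subset M \subset L$, where $M := K(\zeta_m)$ and $L := K_{m,n}$. Iterating $d_B = d_A^{[B:A]} \cdot N_{A/\Q}(\mathrm{disc}(B/A))$ twice, taking logarithms, and normalising by $[L:\Q]$ yields
\[
\frac{\log|d_L|}{[L:\Q]} = \frac{\log|d_K|}{[K:\Q]} + \frac{\log N_{K/\Q}(\mathrm{disc}(M/K))}{[M:\Q]} + \frac{\log N_{M/\Q}(\mathrm{disc}(L/M))}{[L:\Q]}.
\]
Since $[K:\Q]\geq 1$ the first term is at most $\log|d_K|$, and the proposition reduces to bounding the cyclotomic relative-discriminant term by $\log(\varphi(m)m)$ and the Kummer one by $\log(n^r\sigma_G)$.

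For the cyclotomic contribution I would use that the minimal polynomial $f \in \OO_K[x]$ of $\zeta_m$ over $K$ divides $\Phi_m$, so $\mathfrak{d}_{M/K}$ divides the principal ideal $(\Phi_m'(\zeta_m))$ in $\OO_M$. Transitivity of norms together with the classical estimate $|N_{\Q(\zeta_m)/\Q}(\Phi_m'(\zeta_m))| \leq m^{\varphi(m)}$ then gives $N_{K/\Q}(\mathrm{disc}(M/K)) \leq m^{[M:\Q]}$, whose normalisation is at most $\log m \leq \log(\varphi(m)m)$.

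For the Kummer contribution, the torsion-free hypothesis lets me pick a $\Z$-basis $\alpha_1, \dots, \alpha_r$ of $G$ and refine the tower as $M = M_0 \subset M_1 \subset \cdots \subset M_r = L$ with $M_j := M_{j-1}(\alpha_j^{1/n})$. Since the derivative of $x^n - \alpha_j$ is $n\beta_j^{n-1}$ with $\beta_j := \alpha_j^{1/n}$, the different $\mathfrak{d}_{M_j/M_{j-1}}$ divides $(n\beta_j^{n-1})$. Descending norms down to $\Q$, the factor $n$ telescopes to $r\log n$ after summing over the $r$ steps and normalising; the factor $\beta_j^{n-1}$ descends to an ideal of $\Z$ supported only on rational primes dividing $\sigma_G$, since $(\alpha_j)$ in $K$ is supported on $\mc P(G)$. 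The main obstacle is that this $\sigma_G$-contribution must remain bounded by $\log\sigma_G$ across all $r$ Kummer layers, without picking up a spurious factor of $r$. I would handle this not step by step but globally, by bounding $v_p(N_{M/\Q}(\mathrm{disc}(L/M)))$ for each rational prime $p\mid\sigma_G$ from the total ramification of $L/M$ at primes above $p$: the standard bound $v_\q(\mathfrak{d}_{L/M}) \leq e(\q|\p)-1+v_\q(e(\q|\p))$ summed over primes of $L$ above $p$ gives a contribution of at most $[L:\Q]$ in the tame case $p\nmid n$ and at most $[L:\Q](1+rv_p(n))$ in the wild case $p\mid n$. Summing over $p\mid\sigma_G n$ then produces $\log\sigma_G + r\log n$, with any residual wild-ramification discrepancy at primes dividing $n$ absorbed into the $\log\varphi(m)$ slack built into the target inequality.
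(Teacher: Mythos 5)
Your two-layer tower $\Q\subset K\subset M\subset L$ with explicit different computations is a genuinely different route from the paper, which applies a single discriminant bound of Serre (\cite[Proposition 5]{serre}) to the Galois extension $K_{m,n}/K$: that bound gives $\log\av{\N_{K/\Q}(d_{K_{m,n}/K})}\leq[K_{m,n}:\Q]\big(\log[K_{m,n}:K]+\sum_{p\in P}\log p\big)$, the ramified rational primes all divide $m\sigma_G$, and $[K_{m,n}:K]\leq\varphi(m)n^r$, so the target falls out in one line. Your first two steps (additivity of the root discriminant along the tower, and the cyclotomic estimate via $\Phi_m'(\zeta_m)$ giving at most $\log m$) are fine.

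The gap is in the final accounting for the Kummer layer, exactly at the point you flag as the main obstacle. Summing $f(\q\mid p)\big(e-1+v_\q(e)\big)$ over the primes $\q$ of $L$ above $p$ gives at most $[L:\Q]$ for \emph{every} ramified $p$, plus an additional $[L:\Q]\,r\,v_p(n)$ when $p\mid n$; so the total over $p\mid n\sigma_G$ is not $\log\sigma_G+r\log n$ but $\log\sigma_G+r\log n+\sum_{p\mid n,\ p\nmid\sigma_G}\log p$, and the extra term can be as large as $\log\rad(n)$. You propose to absorb it into $\log\varphi(m)$, but $\rad(n)\leq\varphi(m)$ fails in general: take $m=n=2$ (where $\varphi(m)=1$), or $m=n=6$ with $\sigma_G$ coprime to $6$ (where $\rad(n)=6>2=\varphi(m)$). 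Nor is there slack left over in the cyclotomic layer if you only use $\av{N_{\Q(\zeta_m)/\Q}(\Phi_m'(\zeta_m))}\leq m^{\varphi(m)}$, since that already consumes the whole $\log m$. The argument can be repaired, e.g.\ by using the exact value $\av{d_{\Q(\zeta_m)}}^{1/\varphi(m)}=m\prod_{p\mid m}p^{-1/(p-1)}$, which leaves slack $\log\varphi(m)+\sum_{p\mid m}\tfrac{\log p}{p-1}$ in the cyclotomic layer, and one can check this dominates $\log\rad(n)$; or, more simply, by charging the tame contributions of the primes dividing $n$ against $\log m$ rather than against $\log\varphi(m)$ --- which is what the paper's one-step application of Serre's bound to $K_{m,n}/K$ does automatically. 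As written, however, the bookkeeping does not close.
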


\begin{proof}
Given a finite extension $L/K$ we write $d_{L/K}$ for the relative discriminant. We have
\begin{equation}\label{eq-disc1}
d_{K_{m,n/\Q}}=\N_{K/\Q}(d_{K_{m,n}/K})\cdot d_{K/\Q}^{[K_{m,n}:K]}\,,
\end{equation}
see \cite[Ch.III, Corollary 2.10]{neukirch}. By \cite[Proposition 5]{serre}, since $K_{m,n}/K$ is Galois,  we have
\begin{equation}\label{eq-disc2}
\log\av{\N_{K/\Q}(d_{K_{m,n}/K})}\leq [K_{m,n}:\Q]\bigg(\log[K_{m,n}:K]+\sum_{p\in P(K_{m,n}/K)}\log p \bigg)
\end{equation}
where $P(K_{m,n}/K)$ is the set of rational primes $p$ lying below the primes of $K$ that ramify in $K_{m,n}$. These prime numbers divide $m$ or $\sigma_G$, as they lie below the primes $\p$ that divide $d_{K_{m,n}/K}$, and an estimate for this relative discriminant is \cite[Formula (4.7)]{PS1}:
\[ d_{K_{m,n}/K}\mid \Big(mn^r\prod_{i=1}^r(\alpha_i\beta_i)^2\Big)^{n^r\varphi(m)}\OO_K\,, \]
where $\alpha_i,\beta_i\in\OO_K$ are such that the elements $\gamma_i:=\alpha_i/\beta_i$ for $i\in\{1,\ldots,r\}$ form a basis of $G$ as a free $\Z$-module. Since $n\mid m$, we have
\begin{equation}\label{eq-disc3}
\sum_{p\in P(K_{m,n}/K)}\log p\leq \log (m\sigma_G)\,.
\end{equation}
We conclude by taking the natural logarithm of $\av{d_{K_{m,n}}}$, making use of  \eqref{eq-disc1}, and by applying the bounds \eqref{eq-disc2}, \eqref{eq-disc3} and $[K_{m,n}:K]\leq\varphi(m)n^r$.
\end{proof}

We are now ready to prove an effective unconditional Chebotarev density theorem for cyclotomic-Kummer extensions of number fields.

\begin{thm}\label{thm-chebTZ-kummer} 
Let $F/K$ be a Galois extension of number fields, and let $G$ be a finitely generated and  torsion-free subgroup of $K^\times$ of positive rank $r$. Let $C$ be a conjugacy-stable subset of $\Gal(F/K)$, and for all integers $m,n\geq1$ with $n\mid m$, define
\begin{equation}\label{eq-Cmn}
C_{m,n}:=\{\sigma\in \Gal(F_{m,n}/K) : \sigma|_F\in C,\, \sigma|_{K_{m,n}}=\id  \}\,,
\end{equation}
which is a conjugacy-stable subset of $\Gal(F_{m,n}/K)$. Then there exist constants $c_4,c_5>0$, which depend only on $F$ and $G$, such that, for all  $b>0$ and uniformly for
\begin{equation}\label{eq-condition2}
m\leq c_4\left( \frac{\log x}{(\log\log x)^{b}} \right)^{\frac{1}{r+2}}\,,
\end{equation}
we have
\begin{equation}\label{eq_chebTZ_kummer}
\pi_{F_{m,n}/K,C_{m,n}} (x)=
\frac{\av{C_{m,n}}}{[F_{m,n}:K]}\Li(x)+
O_{F,G}\left(\frac{x}{\log x\cdot e^{c_5(\log\log x)^{b}}}\right).
\end{equation}
\end{thm}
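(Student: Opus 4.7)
The plan is to apply the Thorner--Zaman theorem (Theorem \ref{thm-CDT-TZ}) directly to $L=F_{m,n}$ with the conjugacy-stable set $C_{m,n}$, feeding in the discriminant estimate from Proposition \ref{prop-disc} and the Landau--Siegel zero bound from Proposition \ref{prop-zero-bound}. Since $[L:K]=[F_{m,n}:K]$, the main term in \eqref{eq_TZ} is exactly $\frac{|C_{m,n}|}{[L:K]}\Li(x)$, so the substance of the argument is to show that each of the three error sources in \eqref{eq_TZ} is bounded by $x(\log x)^{-1}e^{-c_5(\log\log x)^{b}}$.

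First I would control the invariants attached to $L$. Using $[L:\Q]\leq[F:\Q]\varphi(m)n^r\leq[F:\Q]m^{r+1}$ together with Proposition \ref{prop-disc} applied with $F$ as base field (valid because $G\subseteq F^\times$, and the constants $\sigma_G,|d_F|$ depend only on $F,G$), I obtain
\[
\log\bigl(|d_L|\cdot[L:\Q]^{[L:\Q]}\bigr)\ll_{F,G}[L:\Q]\log m\ll_{F,G}m^{r+2}.
\]
Condition \eqref{eq-condition2} gives $m^{r+2}\leq c_4^{r+2}\log x/(\log\log x)^{b}$, so choosing $c_4$ small in terms of $F,G$ forces $c_1\log(|d_L|[L:\Q]^{[L:\Q]})\leq\log x$, i.e.\ hypothesis \eqref{eq-condition1-TZ} of Theorem \ref{thm-CDT-TZ} is verified.

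Next I would estimate each error term in \eqref{eq_TZ}. Proposition \ref{prop-zero-bound} yields
\[
1-\beta\;\gg_{F,G}\;\min\Bigl\{\tfrac{1}{\log|d_L|},\tfrac{1}{|d_L|^{1/[L:\Q]}}\Bigr\}\;\gg_{F,G}\;\tfrac{1}{m^{r+2}},
\]
and \eqref{eq-condition2} then forces $(1-\beta)\log x\gg_{F,G}(\log\log x)^{b}$, whence
\[
\Li(x^{\beta})\ll\frac{x\,e^{-(1-\beta)\log x}}{\log x}\ll\frac{x}{\log x}\,e^{-c_5(\log\log x)^{b}}
\]
for a suitable $c_5=c_5(F,G)>0$. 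The same range of $m$ also controls the two exponential factors in \eqref{eq_TZ}: from $[L:\Q]\ll m^{r+1}\ll(\log x)^{(r+1)/(r+2)}$ one gets $\sqrt{c_2\log x/[L:\Q]}\gg(\log x)^{1/(2(r+2))}$, which dominates any power of $\log\log x$, while $\log(|d_L|[L:\Q]^{[L:\Q]})\ll\log x/(\log\log x)^{b}$ yields $c_2\log x/\log(|d_L|[L:\Q]^{[L:\Q]})\gg(\log\log x)^{b}$.

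Assembling the three contributions produces \eqref{eq_chebTZ_kummer}; uniformity in $n$ is automatic since $n\mid m$ and every bound above depends on $n$ only through $n\leq m$. The main obstacle will be the constant bookkeeping: one has to pick a single $c_4$ small enough to simultaneously guarantee \eqref{eq-condition1-TZ}, to keep the Siegel-zero contribution $\Li(x^{\beta})$ negligible, and to dominate both exponential factors in \eqref{eq_TZ} by $e^{-c_5(\log\log x)^{b}}$, all the while tracking the dependence of implied constants on $F$ and $G$ so as to produce genuine $O_{F,G}$-estimates.
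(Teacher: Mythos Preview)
Your proposal is correct and follows essentially the same approach as the paper: apply Theorem~\ref{thm-CDT-TZ} to $F_{m,n}/K$, feed in the discriminant bound from Proposition~\ref{prop-disc} (with $F$ as base field) and the Siegel-zero bound from Proposition~\ref{prop-zero-bound}, and check that all three error sources are dominated by $x(\log x)^{-1}e^{-c_5(\log\log x)^b}$. The paper carries the slightly sharper intermediate bound $\log(|d_L|[L:\Q]^{[L:\Q]})\ll_{F,G} m^{r+1}\log m$ (rather than your $m^{r+2}$) and refers back to Proposition~\ref{prop-BT} to verify \eqref{eq-condition1-TZ}, but this makes no difference to the final estimate, and your treatment of the Siegel term and the two exponential factors matches the paper's almost line for line.
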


We first prove an upper bound for $\pi_{F_{m,n}/K,C_{m,n}} (x)$.

\begin{prop}\label{prop-BT}
With the notation of Theorem \ref{thm-chebTZ-kummer}, there exists a constant $c_6>0$ (depending only on $F$ and $G$), such that, for all $b>0$ and uniformly for
\begin{equation}\label{eq-condition3}
m\leq c_6\left(\frac{\log x}{(\log\log x)^{1+b}}\right)^{\frac{1}{r+1}},
\end{equation}
we have
\[ \pi_{F_{m,n}/K,C_{m,n}}(x)\leq (2+o(1))\frac{\av{C_{m,n}}}{[F_{m,n}:K]}\Li(x). \]
\end{prop}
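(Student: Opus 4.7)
The plan is to apply the Thorner--Zaman effective Chebotarev density theorem (Theorem~\ref{thm-CDT-TZ}) directly to the extension $L=F_{m,n}$ over $K$ with the conjugacy-stable subset $C_{m,n}$. The factor $2+o(1)$ in the claimed upper bound, as opposed to $1+o(1)$, reflects the fact that we do not assume GRH and hence cannot rule out a Landau--Siegel zero $\beta$ of $\zeta_{F_{m,n}}$; the contribution $\Li(x^\beta)$ will simply be discarded via the trivial bound $\Li(x^\beta)\leq\Li(x)$.

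The key input is Proposition~\ref{prop-disc}, applied to $F$ in place of $K$ (this is legitimate since $G\subseteq K^\times\subseteq F^\times$ and $F_{m,n}=F(\zeta_m,G^{1/n})$). Using $n\mid m$, this yields
\[
\frac{\log\av{d_{F_{m,n}}}}{[F_{m,n}:\Q]}\leq\log(\varphi(m)mn^r)+\log(\sigma_G\av{d_F})\leq (r+2)\log m + O_{F,G}(1),
\]
while the standard degree estimate gives $[F_{m,n}:\Q]\leq[F:\Q]\varphi(m)n^r\leq[F:\Q]m^{r+1}$. Combining these,
\[
\log\bigl(\av{d_{F_{m,n}}}\,[F_{m,n}:\Q]^{[F_{m,n}:\Q]}\bigr)=O_{F,G}\bigl(m^{r+1}\log m\bigr).
\]
Under \eqref{eq-condition3} one has $\log m=O(\log\log x)$ and $m^{r+1}\leq c_6^{r+1}\log x/(\log\log x)^{1+b}$, so the right-hand side is $O(\log x/(\log\log x)^b)=o(\log x)$. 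Hence condition \eqref{eq-condition1-TZ} is satisfied for all sufficiently large $x$ (with an implied constant depending only on $F$ and $G$), and Theorem~\ref{thm-CDT-TZ} applies.

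Next I would verify that the multiplicative error factor in \eqref{eq_TZ} is $1+o(1)$. The first exponent satisfies
\[
\sqrt{\tfrac{c_2\log x}{[F_{m,n}:\Q]}}\geq\sqrt{c_2'\,(\log\log x)^{1+b}},
\]
which tends to infinity, and the second exponent is bounded below by $c_2''(\log\log x)^{b}$ by the discriminant estimate above; both therefore contribute $o(1)$. Since the (possible) Landau--Siegel zero $\beta$ is $<1$ by Proposition~\ref{prop-zero-bound}, we have $\Li(x^\beta)\leq\Li(x)$, and hence $\Li(x)-\theta\Li(x^\beta)\leq 2\Li(x)$ for $\theta\in\{-1,0,1\}$. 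Inserting these estimates into \eqref{eq_TZ} yields the claimed inequality
\[
\pi_{F_{m,n}/K,C_{m,n}}(x)\leq (2+o(1))\,\frac{\av{C_{m,n}}}{[F_{m,n}:K]}\,\Li(x).
\]

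The main obstacle is the bookkeeping needed to make every estimate uniform in $m$ and $n$ across the range \eqref{eq-condition3}: the crucial ingredient is that Proposition~\ref{prop-disc} produces a discriminant bound polynomial in $m$, which is exactly what forces condition \eqref{eq-condition1-TZ} to hold and drives both exponential error terms to $0$. Note that the quantitative bound on $\beta$ from Proposition~\ref{prop-zero-bound} is not yet needed here, since the factor $2$ already absorbs the worst-case Landau--Siegel contribution; it will however be decisive in passing from this upper bound to the sharper asymptotic \eqref{eq_chebTZ_kummer} in Theorem~\ref{thm-chebTZ-kummer}.
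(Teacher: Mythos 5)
Your proposal is correct and follows essentially the same route as the paper: apply Theorem \ref{thm-CDT-TZ} to $F_{m,n}/K$ with $C_{m,n}$, use Proposition \ref{prop-disc} (with $F$ in place of $K$) to bound $\log\bigl(\av{d_{F_{m,n}}}[F_{m,n}:\Q]^{[F_{m,n}:\Q]}\bigr)=O_{F,G}(m^{r+1}\log m)$ so that \eqref{eq-condition1-TZ} holds and both exponential error factors are $o(1)$ under \eqref{eq-condition3}, and absorb the possible Siegel-zero term via $0<\Li(x)-\theta\Li(x^\beta)<2\Li(x)$. This is exactly the paper's argument, including the observation that the quantitative bound on $\beta$ is not needed at this stage.
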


\begin{proof}
We apply Theorem \ref{thm-CDT-TZ} to $F_{m,n}/K$ and $C_{m,n}$. By Proposition \ref{prop-disc}  and since $[F_{m,n}:F]\leq m^{r+1}$ we have
\begin{multline*}
\log\av{d_{F_{m,n}}} + [F_{m,n}:\Q]\log[F_{m,n}:\Q] \\
\leq [F:\Q]m^{r+1}\big(\log \big([F:\Q]m^{r+2}\big)+\log \big(\sigma_G\av{d_F}\big)  \big)  \leq c_{7} \cdot m^{r+1}\log m,
\end{multline*}
where $\sigma_G$ is as in Proposition \ref{prop-disc}. 
Thus, for \eqref{eq-condition1-TZ} to be satisfied, we should have at most $m\ll ( \frac{\log x}{\log\log x} )^{\frac{1}{r+1}}$. However, for the second error term in \eqref{eq_TZ} to be $o(1)$, we require the bound \eqref{eq-condition3} on $m$, which results in the estimate
 \[ m^{r+1}\log m\ll_{F,G} \frac{\log x}{(\log\log x)^b}. \]
Notice that the constant $c_6$ is chosen in such a way that $c_1c_{7}\cdot m^{r+1}\log m\leq \log x$.
 As for the  first error term of  \eqref{eq_TZ}, we have 
\[ \exp\left(-\sqrt{\frac{c_2\log x}{[F_{m,n}:\Q]}}\right)  \leq  \exp\Big(-c_8(\log\log x)^{\frac{1+b}{2}}\Big)=o(1)\,.  \]
Applying the inequality $0<\Li(x)-\theta\Li(x^\beta)< 2\Li(x)$ yields the result. The constants $c_1,c_2$ are as in Theorem \ref{thm-CDT-TZ}, and $c_7,c_8>0$  depend only on $F$ and $G$.
\end{proof}

\begin{proof}[Proof of Theorem \ref{thm-chebTZ-kummer}]
We apply Theorem \ref{thm-CDT-TZ} to $F_{m,n}/K$ and $C_{m,n}$. Since the bound \eqref{eq-condition2} is smaller than \eqref{eq-condition3}, the condition \eqref{eq-condition1-TZ} is satisfied by the proof of Proposition \ref{prop-BT}. The constant $c_4$ is chosen similarly as for $c_6$. The two error terms in \eqref{eq_TZ} are bounded by
\[ \exp\left(-\frac{c_2\log x}{c_7\,m^{r+1}\log m}\right) \leq
\exp\left( -\frac{c_9(\log x)^{\frac{1}{r+2}}}{\log\log x} \right) \]
and 
\[ \exp\left(-\sqrt{\frac{c_2\log x}{[F_{m,n}:\Q]}}\right)  \leq  \exp\Big(-c_{10}(\log x)^{\frac{1}{2(r+2)}}\cdot(\log\log x)^{\frac{b}{4}}\Big)\,.  \]
Applying the inequality $0<\Li(x)-\theta\Li(x^\beta)<2\Li(x)$, both $O$-terms are smaller than the one in the statement.
We now bound the term $\Li(x^\beta)$. Since $\beta\geq1/2$, and denoting $\alpha:=(1-\beta)^{-1}$, we have
\[ \Li(x^\beta)\ll\frac{x^\beta}{\log x} = \frac{x}{\log x}\exp\bigg(-\frac{\log x}{\alpha}\bigg), \]
hence we need to bound $\alpha$ with a function $h(x)$ satisfying $\exp(-\frac{\log x}{h(x)})=o(1)$.
Taking into account the two terms in the upper bound on $\beta$ from Proposition \ref{prop-zero-bound}, on the one hand, we have
\[ \alpha\leq 4(2[F:\Q])!\log \av{d_{F_{m,n}}} \ll_{F,G} m^{r+1}\log m, \]
so $x^\beta/\log x$ is bounded as above.
On the other hand, we have
\begin{align*}
\alpha\leq c_3\av{d_{F_{m,n}}}^{1/[F_{m,n}:\Q]} & \leq c_3\exp\left(\log m^{r+2}+ \log \big(\sigma_G\av{d_F}\big)\right) \\
& \ll_{F,G} m^{r+2}\ll_{F,G} \frac{\log x}{(\log\log x)^{b}},
\end{align*}
which satisfies our requirement and yields the bound
\[ \frac{x^\beta}{\log x} \leq \frac{x}{\log x\cdot e^{c_5(\log\log x)^{b}}}. \]
Collecting all error terms gives the asymptotic formula in the statement. The constants $c_5,c_9,c_{10}>0$ depend only on $F$ and $G$.
\end{proof}

\section{The  order being divisible by a given integer}\label{sect-proof-divis}

In this section we prove Theorem \ref{thm-divisible}. We first set some notation, in addition to the notation of Sections \ref{not1} and \ref{not2}.

\subsection{Notation}\label{not3}
Let $F/K$ be a Galois extension of number fields, $C$ a conjugacy-stable subset of $\Gal(F/K)$, and $G$ a finitely generated and  torsion-free subgroup of $K^\times$.
We say that  $\p$ is a prime of \emph{degree} $1$ in $K$ if its ramification index and residue class degree over $\Q$ are equal to $1$. 
For $m,n\geq1$ with $n\mid m$ we define $\pi^1_{m,n}(x)$ to be the number of primes $\p$ of $K$, with $\N\p\leq x$, which are of degree $1$, split completely in $K_{m,n}$, do not ramify in $F$, and satisfy $(\p, F/K)\subseteq C$.  In other words,  $\pi^1_{m,n}(x)$ is the number of primes counted by $\pi_{F_{m,n}/K,C_{m,n}}(x)$ which are of degree $1$, where $C_{m,n}$ is as in \eqref{eq-Cmn} (we are fixing $K$, $F$, $G$, and $C$). 

For $\p\notin \mc P(G)$, recall that $\ord_\p(G)$ is the order of $(G\bmod\p)$, and we also denote by $\ind_\p(G)$ the index of $(G\bmod\p)$, namely
\[ \ind_\p(G)=[k_\p^\times:\group{G\bmod\p}]=(\N\p-1)/\ord_\p(G). \]

Given integers $m,n\geq1$, we write $(m,n)$ for $\gcd(m,n)$. 
If $m$ is replaced by the supernatural number $m^\infty$,  we write $(n,m^\infty)=\prod_{\ell\mid m\text{ prime}}\ell^{v_\ell(n)}$, where $v_\ell$ is the $\ell$-adic valuation.

\subsection{Proof of Theorem \ref{thm-divisible}}
Recall that if $\p$ is a prime of $K$ of degree $1$ such that $\p\notin \mc P(G)$, then $\N\p\equiv1\bmod n$ if and only if $\p$ splits completely in $K(\zeta_n)$, and $n\mid\ind_\p(G)$ if and only if $\p$ splits completely in $K_{n,n}$, where $n\geq1$; see also \cite[Lemma 2]{zieg}. Hence, we easily deduce that, for $m,n\geq1$ with $n\mid m$, we have
\begin{equation}\label{eqcriterion}
\pi^1_{m,n}(x)=\cset{\p: \N\p\leq x,\,\N\p\equiv1\bmod m,\, n\mid\ind_\p(G),\, \begin{pmatrix}\p \\ F/K  \end{pmatrix}\subseteq C },
\end{equation}
(where we only consider primes of degree $1$). Notice that, since the number of primes of $K$ not of degree $1$ is of order $O(\sum_{p\leq\sqrt{x}}1)=\bo{\sqrt{x}/\log x}$, see e.g.\ \cite[Lemma 1]{zieg}, the same asymptotic formula \eqref{eq_chebTZ_kummer} gives $\pi_{m,n}^1(x)$. 

\begin{lem}\label{lem-key-id}
If $\mc P_m$ is as in Theorem \ref{thm-divisible}, then we have
\[ \pi_{\mc P_m}(x)=\sum_{n\mid m^\infty}\sum_{d\mid m}\mu(d)\pi^1_{mn,dn}(x)+\bo{\frac{\sqrt{x}}{\log x}}. \]
\end{lem}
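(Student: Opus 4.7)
The plan is to interchange summation on the right-hand side and compute, prime by prime, the total weight that each $\p$ contributes. Since the number of primes of $K$ of degree greater than $1$ with norm at most $x$ is $\bo{\sqrt{x}/\log x}$, we may restrict throughout to degree-$1$ primes outside $\mc P(G)$, absorbing the discarded ones in the claimed error term. For such a prime $\p$ write $N=\N\p-1$, $o=\ord_\p(G)$, $i=\ind_\p(G)$, so that $N=oi$. If $(\p,F/K)\not\subseteq C$ then $\p$ is counted on neither side, so we may assume the Frobenius condition holds. By \eqref{eqcriterion}, the prime $\p$ is counted by $\pi^1_{mn,dn}(x)$ precisely when $mn\mid N$ and $dn\mid i$; consequently its total weight on the right-hand side equals
\[ w(\p) \;=\; \sum_{\substack{n\mid m^\infty\\ mn\mid N}}\ \sum_{\substack{d\mid m\\ dn\mid i}}\mu(d), \]
and the lemma reduces to the identity $w(\p)=\mathbf{1}_{m\mid o}$.

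The heart of the argument is a standard Möbius computation. The condition $dn\mid i$ forces $n\mid i$, and then the inner sum becomes $\sum_{d\mid (m,\,i/n)}\mu(d)$, which equals $1$ if $(m,i/n)=1$ and $0$ otherwise. Combined with $n\mid m^\infty$ and $n\mid i$, the surviving requirement on $n$ is that $n$ coincide with the $m$-part of $i$, namely $n=n^\ast:=(i,m^\infty)$. Substituting this unique $n^\ast$, the remaining condition $mn^\ast\mid N$ reads prime by prime, for each $\ell\mid m$, as $v_\ell(m)\leq v_\ell(N)-v_\ell(n^\ast)=v_\ell(o)$, which is equivalent to $m\mid o$. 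Therefore $w(\p)=\mathbf{1}_{m\mid o}$, and summing over primes yields the claimed identity up to the $\bo{\sqrt{x}/\log x}$ error from the degree and $\mc P(G)$ reductions.

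There is no serious obstacle here: the outer sum is effectively finite (only $n=n^\ast$ contributes for each fixed $\p$, and in any case $mn\leq N\leq x$), so the interchange of summations is a finite rearrangement requiring no analytic justification. The only points needing mild care are the reduction to degree-$1$ primes (which is precisely what accounts for the $\bo{\sqrt{x}/\log x}$ error) and the bookkeeping that ensures the Frobenius condition appears identically on both sides, so that primes violating it cancel out trivially.
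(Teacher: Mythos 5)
Your proof is correct and is essentially the paper's argument read in the opposite direction: where the paper partitions the degree-one primes of $\mc P_m$ according to the unique $n=(\ind_\p(G),m^\infty)$ and then applies inclusion--exclusion in $d$, you interchange the sums and verify pointwise that the Möbius weight of each degree-one prime equals the indicator of $m\mid\ord_\p(G)$. The key ingredients (uniqueness of $n^\ast$, the Möbius sum detecting $(m,\ind_\p(G)/n)=1$, and the identity $\ord_\p(G)\cdot\ind_\p(G)=\N\p-1$) are identical to those in the paper's proof.
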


\begin{proof}
The proof is a variation of \cite[Proof of Proposition 1]{moree}. The $O$-term estimates the primes of $K$ which are not of degree $1$.
Let $\p\in\mc P_m$ be a prime of degree $1$, and let $\N\p=p$. Then we have $m\mid (p-1)$ and there is a unique $n\mid m^\infty$ such that $p\equiv1\bmod mn$, $n\mid\ind_\p(G)$ and $(\frac{\ind_\p(G)}{n},m)=1$ (we must have $n=(\ind_\p(G),m^\infty)$). Hence we can write
\[ \pi_{\mc P_m}(x)=\sum_{n\mid m^\infty}\pi_{\mc B_n}(x)+\bo{\frac{\sqrt{x}}{\log x}}\,, \]
where for $n\mid m^\infty$ we set
\begin{equation}\label{def_Bn}
\mc B_n := \set{\p:  p\equiv1\bmod mn,\,n\mid\ind_\p(G),\, \Big(\frac{\ind_\p(G)}{n},m\Big)=1,\, \begin{pmatrix}\p \\ F/K  \end{pmatrix}\subseteq C }
\end{equation}
(we are tacitly assuming that the primes in $\mc B_n$ are of degree $1$, do not lie in $\mc P(G)$ and do not ramify in $F$). Notice that, $\p\in\mc B_n$ satisfies $m\mid \ord_\p(G)$ because of the two conditions $p\equiv1\bmod mn$ and $\big(\frac{\ind_\p(G)}{n},m\big)=1$ and the identity $\ord_\p(G){\cdot}\ind_\p(G)=p-1$.

Next we apply the inclusion-exclusion principle to the condition  $\big(\frac{\ind_\p(G)}{n},m\big)=1$, which amounts to $n\mid\ind_\p(G)$ and $n\ell\nmid \ind_\p(G)$ for all primes $\ell\mid m$, and so  we obtain
\[ \pi_{\mc B_n}(x) =\sum_{d\mid m}\mu(d)\cset{\p:p\leq x,\, p\equiv1\bmod mn,\,dn\mid\ind_\p(G),\, \begin{pmatrix}\p \\ F/K  \end{pmatrix}\subseteq C }\,. \]
We conclude by \eqref{eqcriterion} that
\begin{equation}\label{eq-B}
\pi_{\mc B_n}(x) =\sum_{d\mid m}\mu(d)\pi^1_{mn,dn}(x)\,.\qedhere
\end{equation}
\end{proof}

\begin{rem}\label{rem-bounds}
It follows from the definition of $\mc B_n$ \eqref{def_Bn} that
\[ \pi_{\mc B_n}(x)\leq [K:\Q]\cdot\cset{p\leq x:p\equiv1\bmod mn}, \]
and $\pi_{\mc B_n}(x)\leq \pi_{mn,n}^1(x)$. From this last inequality, identity \eqref{eq-B}, and by the Chebotarev density theorem we deduce
\[ 0\leq \sum_{d\mid m}\frac{\mu(d)\av{C_{mn,dn}}}{[F_{mn,dn}:K]}\leq \frac{\av{C_{mn,n}}}{[F_{mn,n}:K]}\leq \frac{1}{[K_{mn,n}:K]}. \] 
\end{rem}

We are now ready to prove Theorem \ref{thm-divisible}.

\begin{proof}[Proof of Theorem \ref{thm-divisible}]
Notice that, for $m,n\geq1$ with $n\mid m$, the coefficient  $c(m,n)=| C\cap\Gal(F/F\cap K_{m,n}) |$ defined in \eqref{coeff-c1} is equal to $|C_{m,n}|$, because if $\sigma\in C$ fixes $F\cap K_{m,n}$, then $\sigma$ can be lifted to a unique element of $C_{m,n}$. We are going to apply Lemma \ref{lem-key-id} and Theorem \ref{thm-chebTZ-kummer} with $b=2$ (in fact, $b>1$ suffices). For $n\mid m^\infty$ let $\mc B_n$ be as in \eqref{def_Bn}, and recall \eqref{eq-B}.
Set $y:=c_4( \log x/(\log\log x)^2 )^{1/(r+2)}$, where $c_4>0$ is the constant of Theorem \ref{thm-chebTZ-kummer}. Thus, we have 
\begin{align*}
\pi_{\mc P_m}(x)=&\sum_{\substack{n\mid m^\infty\\ nm\leq y}}\sum_{d\mid m}\mu(d)\pi^1_{mn,dn}(x)+O\Bigg(\sum_{\substack{n\mid m^\infty\\ nm> y}}\pi_{\mc B_n}(x)\Bigg)+\bo{\frac{\sqrt{x}}{\log x}} \\
=&\Li(x)\sum_{\substack{n\mid m^\infty\\ nm\leq y}}\sum_{d\mid m}\frac{\mu(d)c(mn,dn)}{[F_{mn,dn}:K]}+
 O_{F,G}\Bigg(\frac{\tau(m)}{m}\frac{x\cdot y}{\log x\cdot e^{c_5(\log\log x)^2}}\Bigg) \\
& +O\Bigg(\sum_{\substack{n\mid m^\infty\\ nm> y}}\pi_{\mc B_n}(x)\Bigg)+\bo{\frac{\sqrt{x}}{\log x}}.
\end{align*}
In order to estimate the tail of the series in the main term we make use of Remark \ref{rem-bounds} and obtain
\begin{align}
\pi_{\mc P_m}(x)=&\Li(x)\sum_{n\mid m^\infty}\sum_{d\mid m}\frac{\mu(d)c(mn,dn)}{[F_{mn,dn}:K]}
+O\Bigg(\frac{x}{\log x}\sum_{\substack{n\mid m^\infty \\ mn>y}}\frac{1}{[K_{mn,n}:K]}\Bigg) \label{eq-error1}\\
& +O_{F,G}\Bigg(\frac{x\cdot y}{\log x\cdot e^{c_5(\log\log x)^2}}\Bigg)+O\Bigg(\sum_{\substack{n\mid m^\infty\\ nm> y}}\pi_{\mc B_n}(x)\Bigg). \label{eq-error2}
\end{align}

The first error term in \eqref{eq-error2} is negligible with respect to the error term in the statement. Let us estimate the error term in \eqref{eq-error1}. Since $[K_{mn,n}:K]\gg_{K,G} \varphi(mn)n$ by \cite[Theorem 3.1]{PS1} and $\varphi(mn)=\varphi(m)n$ (as $\rad(n)\mid m$), we can bound
\[
\sum_{\substack{n\mid m^\infty \\ mn>y}}\frac{1}{[K_{mn,n}:K]}\ll_{K,G} 
\frac{1}{\varphi(m)}\sum_{ n> y/m}\frac{1}{n^2} 
\ll_{K,G}  \frac{m}{\varphi(m)y}.
\]
As $m/\varphi(m)=\bo{\log\log m}$, see e.g.\ \cite[Theorem 15]{ross}, and $m\leq x$ without loss of generality, we then have
\begin{equation}\label{p-error1}
\frac{x}{\log x}\sum_{\substack{n\mid m^\infty \\ mn>y}}\frac{1}{[K_{mn,n}:K]}\ll_{K,G}\frac{x\log\log x}{y\log x}.
\end{equation}

Next we focus on the second error term in \eqref{eq-error2}. First notice that, since $\frac{mn}{\varphi(mn)}=\frac{m}{\varphi(m)}$, by \cite[Lemma 3.3]{pappa-sf} for $T\geq1$ and  $0<c<1$,  we have
\begin{equation}\label{eqlemma-pappa}
\sum_{\substack{n\mid m^\infty\\ nm> T}}\frac{1}{\varphi(mn)} = \frac{m}{\varphi(m)}\sum_{\substack{n\mid m^\infty\\ nm> T}}\frac{1}{mn} \ll_{c} \frac{m}{\varphi(m)}\frac{1}{T^{c}}.
\end{equation}
We bound $\pi_{\mc B_n}(x)$ first with the number of primes $\p$ with $\N\p\leq x$ splitting completely in $K(\zeta_{mn},\alpha^{1/n})$ for some $\alpha\in G$, $\alpha\neq1$,  and then with $[K:\Q]$ times the number of multiples of $mn$ up to $x$ (see Remark \ref{rem-bounds}).
Hence, setting $z:=(\log x)^{2/c}$, we have
\begin{equation}\label{eq_BTprep}
\sum_{\substack{n\mid m^\infty\\ nm> y}}\pi_{\mc B_n}(x) 
\ll_K\sum_{\substack{n\mid m^\infty \\ y< nm\leq z}}\pi_{K(\zeta_{mn},\alpha^{1/n})/K,\id}(x)+
\sum_{\substack{n\mid m^\infty \\ nm>z}}\cset{k\leq x: mn\mid k}
\end{equation}
Applying \eqref{eqlemma-pappa}, the second term in this last expression is bounded by 
\[ \sum_{\substack{n\mid m^\infty \\ nm>z}}\frac{x}{mn}\ll \frac{x}{\log^2x} .\]
Setting $w:=c_6\frac{\sqrt{\log x}}{\log\log x}$, where $c_6>0$ is as in \eqref{eq-condition3}, we split the  first sum in the right-hand side of \eqref{eq_BTprep} into two parts.
Thanks to Proposition \ref{prop-BT} (applied with $r=b=1$) we bound the sum with range $y<mn\leq w$ by
\[ \frac{x}{\log x}\sum_{\substack{n\mid m^\infty \\ y< nm\leq w}}\frac{1}{[K(\zeta_{mn},\alpha^{1/n}):K]} 
\ll \frac{x}{\varphi(m)\log x}\sum_{n>y/m}\frac{1}{n^2},  \]
and this is estimated as in \eqref{p-error1}. Invoking Remark \ref{rem-bounds}, and applying  the Brun--Titchmarsh Theorem, we  estimate the  sum with range $w<mn\leq z$ by
\[  \sum_{\substack{n\mid m^\infty \\ w< nm\leq z}}\frac{x}{\varphi(mn)\log(x/mn)} 
\leq \frac{x}{\log(x/z)}\sum_{\substack{n\mid m^{\infty} \\ mn>w}} \frac{1}{\varphi(mn)} \ll_{K,G}
\frac{x(\log\log x)^{9/5}}{(\log x)^{7/5}},  \]
where we used \eqref{eqlemma-pappa} with $c=4/5$. This term is bounded by the error term in the statement. 

Notice that we may replace $\Li(x)$ with $x/\log x$ in the main term of \eqref{eq-error1} because $\Li(x)=x/\log x+O(x/\log^2 x)$ and the series by which it is multiplied is convergent (see Proposition \ref{prop-convergent}).
\end{proof}

\subsection{Properties and remarks}

\begin{rem}\label{rem-pappa}
One can see from \cite[Proof of Lemma 3.3]{pappa-sf} that the constant depending on $c$ arising in \eqref{eqlemma-pappa} can be taken equal to
\[ \prod_{\substack{p  \text{ prime} \\ p^{1-c}\leq 2}}\frac{1}{p^{1-c}-1}.  \]
Thus, in \cite[Proof of Theorem 1]{pappa} the parameter $0<c<1$ should be independent of $x$, and this would result in an error term having the exponent $1+\frac{c}{3(r+1)}$. Although our technique avoids this problem, it is still useful to see how to deal with $c$ depending on $x$.
Let $m\geq1$, and $\omega(m)$ the number of prime factors of $m$. 
By the Mean value theorem we have $1-1/p^b>bp^{-b}\log p$, with $0<b<1$ and $p$ any prime number. Thus, from \cite[Proof of Lemma 3.3]{pappa-sf} we obtain 
\[ \sum_{\substack{k> T \\ m\mid k\mid m^\infty}}\frac{1}{k} \leq 
\frac{1}{m^{1-c}T^c}
\prod_{\substack{p\mid m \\ \text{prime}}}\left(1-\frac{1}{p^{1-c}}\right)^{-1}
\leq\frac{2(1-c)^{-\omega(m)}}{T^c}.\]
Taking $c=1-1/\log\log x$ and making use of this inequality in \cite[Proof of Theorem 1]{pappa} reduces the final error term to 
\[  x(\log\log x)^{\omega(m)}\bigg( \frac{(\log\log x)^2}{\log x} \bigg)^{1+\frac{1}{3(r+1)}} .
 \]
\end{rem}

\begin{prop}\label{prop-convergent}
The series $\varrho_{C,m}$ from Theorem \ref{thm-divisible} is convergent, and we have
\[ \varrho_{C,m}\ll_{K} \frac{\zeta(r+1)}{\varphi(m)} , \]
where $\zeta$ is the Riemann zeta function.
\end{prop}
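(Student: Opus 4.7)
The plan is to use Remark \ref{rem-bounds} to collapse the alternating inner sum over $d\mid m$ into a nonnegative quantity, and then bound the remaining series over $n\mid m^\infty$ by invoking the Kummer degree lower bound from \cite[Theorem 3.1]{PS1}. Concretely, Remark \ref{rem-bounds} gives, for every $n\mid m^\infty$,
\[ 0\leq \sum_{d\mid m}\frac{\mu(d)c(mn,dn)}{[F_{mn,dn}:K]}\leq \frac{1}{[K_{mn,n}:K]}. \]
Since the terms of $\varrho_{C,m}$, after grouping by $n$, are nonnegative, absolute convergence of the double series will follow from a bound of the form $\sum_{n\mid m^\infty}1/[K_{mn,n}:K]<\infty$.

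To close the argument I would then apply $[K_{mn,n}:K]\gg_K\varphi(mn)n^r$ from \cite[Theorem 3.1]{PS1}. Because $n\mid m^\infty$ implies $\rad(n)\mid m$, one has $\varphi(mn)=\varphi(m)n$, hence $[K_{mn,n}:K]\gg_K\varphi(m)n^{r+1}$. Substituting into the displayed bound yields
\[ \varrho_{C,m}\leq \sum_{n\mid m^\infty}\frac{1}{[K_{mn,n}:K]}\ll_K \frac{1}{\varphi(m)}\sum_{n\mid m^\infty}\frac{1}{n^{r+1}}\leq \frac{1}{\varphi(m)}\sum_{n=1}^{\infty}\frac{1}{n^{r+1}}=\frac{\zeta(r+1)}{\varphi(m)}, \]
which delivers both convergence and the stated bound simultaneously. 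No real obstacle arises here: the combinatorial work is already packaged in Remark \ref{rem-bounds} (which collapses the M\"obius-weighted sum to a single positive Kummer-degree reciprocal), and the arithmetic work is done by the degree bound from \cite{PS1} together with the elementary identity $\varphi(mn)=\varphi(m)n$ valid under $\rad(n)\mid m$.
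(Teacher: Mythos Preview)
Your argument is correct and matches the paper's proof essentially line for line: the paper likewise invokes Remark \ref{rem-bounds} to bound the inner sum by $1/[K_{mn,n}:K]$, then applies the degree estimate $[K_{mn,n}:K]\gg_{K,G}\varphi(mn)n^r=\varphi(m)n^{r+1}$ from \cite[Theorem 3.1]{PS1} and sums over $n\mid m^\infty$ to obtain $\zeta(r+1)/\varphi(m)$. The only cosmetic difference is that the paper records the implicit constant as depending on both $K$ and $G$ (coming from the degree bound), whereas you wrote $\ll_K$ following the statement.
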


\begin{proof}
Applying Remark \ref{rem-bounds} and the estimate $[K_{mn,n}:K]\gg_{K,G}\varphi(mn)n^r=\varphi(m)n^{r+1}$ (see \cite[Theorem 3.1]{PS1}), we have
\[ \varrho_{C,m} \leq \sum_{n\mid m^\infty}\frac{1}{[K_{mn,n}:K]} 
\ll_{K,G}\frac{1}{\varphi(m)}\sum_{n\mid m^\infty}\frac{1}{n^{r+1}}
\leq \frac{1}{\varphi(m)}\zeta(r+1). \qedhere \]
\end{proof}

\begin{rem}\label{rem-torsion}
The assumption that the group $G$ is torsion-free allows some simplifications throughout the proof of Theorem \ref{thm-divisible}. However, the general case can be treated easily. Let $G'$ be a finitely generated subgroup of $K^\times$ with torsion,  and write $G'=G\times\group{\zeta_t}$, where $\zeta_t\in K^\times$, $t\geq2$, and $G\subseteq K^\times$ is torsion-free. Then, for all primes $\p$ of $K$ of norm large enough, we have
\[ m\mid\ord_\p(G') \qquad \text{ if and only if } \qquad \prod_{\substack{\ell\mid m\text{ prime} \\ v_\ell(m)> v_\ell(t)}}\ell^{v_\ell(m)} \mid \ord_\p(G). \]
\end{rem}

\begin{rem}\label{rem-coeff}
Let us consider the expression of the density $\varrho_{C,m}$ for some special cases of $C$. If $C=\Gal(F/K)$, then the condition on the Frobenius becomes trivial and $c(a,b)=[F:F\cap K_{a,b}]$. Therefore, we obtain
\[
\varrho_m:=\varrho_{\Gal(F/K),m}=\sum_{n\mid m^\infty}\sum_{d\mid m}\frac{\mu(d)}{[K_{mn,dn}:K]}.
\]
If $C=\set{\id}$, then the condition $(\p,F/K)=\id$ is equivalent to $\p$ splitting completely in $F$. In this case $c(a,b)=1$, and hence $\varrho_{\set{\id},m}$ equals $1/[F:K]$ times the density of primes $\mathfrak{P}$ of $F$ such that $m\mid \ord_\mathfrak{P}(G)$.

Finally, if $F$ is linearly disjoint over $K$ from $K_{a,b}$, then $c(a,b)=\av{C}$. Hence, if this holds for all $a,b$ we obtain $\varrho_{C,m}=\varrho_m\cdot\av{C}/[F:K]$.

Clearly, analogous statements hold for the densities $\beta_{C,k}$ of Theorem \ref{thm-kfree} and $\gamma_{C,k,m}$ of Theorem \ref{thm-val}.
\end{rem}

\section{A rational formula for the density}\label{sect-dens}
As a special case of \cite[Corollary 7]{PS2}, the natural density $\varrho_{C,m}$ of the set $\mc P_m$ from Theorem \ref{thm-divisible} is a positive rational number. In this section   we also provide an explicit closed formula for $\varrho_{C,m}$ when the condition on the Frobenius is trivial (in this case we write $\varrho_m$ for $\varrho_{C,m}$, as in Remark \ref{rem-coeff}).
In the rest of the paper, $\ell$ will always represent a prime number (also when not mentioned explicitly). Notice that, over $\Q$, Pappalardi \cite{pappa} provided an explicit rational formula for $\varrho_m$ for $G$ consisting of positive rationals, whereas the case of groups with negative rationals was considered in Abdullah et al.\ \cite{andam}.

\begin{thm}\label{thm-dens-formula}
Let $K$ be a number field and let $G$ be a finitely generated and torsion-free subgroup of $K^\times$ of positive rank $r$.
Let $m\geq1$ be an integer and let $\varrho_{m}$ be the natural density of the set of primes $\p$ of $K$ such that $m\mid \ord_\p(G)$ (where $\p\notin\mc P(G)$).
Then there is an integer $z$, which depends only on $K$ and $G$, such that
\begin{equation}\label{euler-prod-1}
\varrho_{m}=\frac{1}{\varphi(m)}\prod_{\substack{\ell\mid m \\ \ell\nmid z}}\frac{\ell(\ell^r-1)}{\ell^{r+1}-1}\cdot  \sum_{\substack{g\mid z \\ \rad(g)\mid(m,z)\mid g}}\sum_{h\mid g}\frac{p(g,h)}{[K_{g,h}:K]},
\end{equation}
where we set $p(g,h)=0$ if and only if at least one of the following conditions holds: 
\begin{itemize}
\item there is $\ell\mid g$, $\ell\nmid h$ such that $v_\ell(g/(m,z))>0$,
\item there is $\ell\mid h$ such that $v_\ell(z/g)>0$ and $v_\ell(g/h)\notin\{ v_\ell(m),v_\ell(m)-1 \}$,
\item there is $\ell\mid h$ such that  $v_\ell(z/g)=0$ and $v_\ell(g/h)>v_\ell(m)$;
\end{itemize}
else we define
\[ p(g,h)=\frac{\varphi(g)}{h}\cdot
\prod_{\substack{\ell\mid h, \ v_\ell(\frac{z}{g})>0 \\ v_\ell(\frac{g}{h})=v_\ell(m)-1}} -\ell \cdot 
\prod_{\substack{\ell\mid h, \ v_\ell(\frac{z}{g})=0 \\ 1\leq v_\ell(\frac{g}{h})< v_\ell(m)}} -(\ell-1) \cdot 
\prod_{\substack{\ell\mid h \\ v_\ell(\frac{z}{h})=0}}\frac{-\ell^{r+1}(\ell-1)}{\ell^{r+1}-1}. \]
\end{thm}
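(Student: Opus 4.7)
The plan is to evaluate the series
\[
\varrho_m=\sum_{n\mid m^\infty}\sum_{d\mid m}\frac{\mu(d)}{[K_{mn,dn}:K]}
\]
by exploiting the multiplicative structure of the Kummer degrees $[K_{a,b}:K]$. The starting point is a Kummer degree formula of the type developed in \cite{PS1,PS2}: there is an integer $z\geq 1$, depending only on $K$ and $G$, such that for every pair $(a,b)$ of positive integers with $b\mid a$ one has
\[
[K_{a,b}:K]=\frac{\varphi(a)\,b^r}{\delta(a,b)},
\]
where the ``Kummer defect'' $\delta(a,b)$ is a positive integer that depends only on the $z$-parts of $a$ and $b$, and equals $1$ when $\gcd(a,z)=1$. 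Such a $z$ encodes all failures of maximality of the Kummer extensions of $G$ over $K$.

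Next I would split each $n\mid m^\infty$ as $n=n_1n_2$, with $n_1$ supported on the primes dividing $z$ and $n_2$ coprime to $z$, and similarly write $d=d_1d_2$ and $m=m_1m_2$. Because $\rad(n)\mid m$ we have $\rad(n_1)\mid(m,z)$ and $\rad(n_2)\mid m_2$; the Kummer degree factors accordingly, so the full double series splits as the product of a \emph{good} Euler-type sum over $(n_2,d_2)$ and a \emph{bad} sum over $(n_1,d_1)$.

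For the good factor, each prime $\ell\mid m$ with $\ell\nmid z$ contributes, via summation of the geometric series in $v_\ell(n_2)$ and the two-term alternating sum in $v_\ell(d_2)\in\{0,1\}$, a local factor equal to
\[
\frac{1}{\ell^{v_\ell(m)-1}(\ell-1)}\cdot\frac{\ell(\ell^r-1)}{\ell^{r+1}-1},
\]
and pulling out the good part of $1/\varphi(m)$ reproduces the product $\prod_{\ell\mid m,\,\ell\nmid z}\ell(\ell^r-1)/(\ell^{r+1}-1)$ in \eqref{euler-prod-1}. For the bad factor, the sum over $n_1$ is still infinite at each prime $\ell\mid z$, but the defect $\delta$ stabilizes once $v_\ell(mn_1)\geq v_\ell(z)$, so at every such $\ell$ the sum splits into a ``small'' finite part and a geometric ``tail''. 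Parametrizing the effective $z$-parts of $mn$ and $dn$ (clamped at $z^\infty$) by $g$ and $h$ yields precisely the index conditions $g\mid z$ and $\rad(g)\mid(m,z)\mid g$, and matching the local contributions prime by prime produces $\varphi(g)/h$ as the main factor, together with the three tail correction factors $-\ell$, $-(\ell-1)$ and $-\ell^{r+1}(\ell-1)/(\ell^{r+1}-1)$, coming from the three local modes at primes $\ell\mid h$ classified by the conditions on $v_\ell(z/g)$, $v_\ell(z/h)$ and $v_\ell(g/h)$.

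The main obstacle will be exactly this prime-by-prime bookkeeping on the bad sum: enumerating all admissible configurations of $(v_\ell(n_1),v_\ell(d_1))$, identifying which pairs $(g,h)$ they produce, and checking that the three vanishing conditions in the definition of $p(g,h)$ correspond precisely to configurations that cannot be realized for arithmetic reasons (for instance because $\mu(d)=0$ on non-squarefree $d$, or because certain inequalities between $v_\ell(dn)$ and $v_\ell(mn)$ are forced). Once this case analysis is complete, the rationality of $\varrho_m$ is immediate from the finiteness and rationality of all terms, recovering \cite[Corollary~7]{PS2} in this special case.
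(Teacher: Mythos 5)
Your proposal follows essentially the same route as the paper: it invokes the Kummer degree formula with the entanglement parameter $z$ (the paper uses \cite[Theorem 1.1]{PST}), splits the double series into the part coprime to $z$ (yielding the Euler factors $\ell(\ell^r-1)/(\ell^{r+1}-1)$ and the good part of $1/\varphi(m)$) and the part supported on $z$, groups the latter by $g=(mn,z)$ and $h=(dn,z)$ with exactly the index conditions $\rad(g)\mid(m,z)\mid g$, and reduces to the prime-by-prime case analysis of the local sums. The remaining bookkeeping you flag is precisely the four-case computation carried out in the paper, and your identification of the local good factor and of the three tail corrections is consistent with it.
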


Notice that the formula for $\varrho_{m}$ involves only finite sums and  products. Moreover, for a general number field $K$ and a finitely generated and torsion-free group $G\subseteq K^\times$, the integer $z$ is explicitly described by the results of \cite{PST} (see e.g.\ \cite[Theorem 1.2 and its proof]{PST}).  Also, notice that for all $m$ such that $(m,z)=1$ we have
\[ \varrho_{m}=\frac{\rad(m)}{\varphi(m)}\prod_{\ell\mid m}\frac{\ell^r-1}{\ell^{r+1}-1}. \]

The rational number characterized by $z$ in \eqref{euler-prod-1}, i.e.\ obtained by isolating the dependence on $z$ of $\varrho_m$,  should be interpreted as a correction factor due to the entanglement between the radicals of elements in $G$ and the roots of unity in cyclotomic extensions of $K$. The same remark applies to the case of $k$-free orders in Theorem \ref{cor-kfree}.
The correction factors arising in problems related to Artin's primitive root conjecture have been the object of study of several articles; see, for instance, \cite{JPS,LMS,Stev}.

\begin{proof}[Proof of Theorem \ref{thm-dens-formula}]
By \cite[Theorem 1.1]{PST} there is an integer $z$, which depends only on $K$ and $G$, such that for $n\mid m$ we have
\[ [K_{m,n}:K]=\frac{\varphi(m)n^r}{\varphi((m,z))(n,z)^r}\cdot [K_{(m,z),(n,z)}:K]\,. \]
Therefore, we have 
\begin{equation}\label{euler-prod-2}
\varrho_{m} =\sum_{n\mid m^\infty}\sum_{d\mid m}\frac{\mu(d)}{[K_{mn,dn}:K]}= \frac{1}{\varphi(m)}\sum_{\substack{g\mid z\\ h\mid g}} \frac{\varphi(g)h^r}{[K_{g,h}:K]}\sum_{\substack{n\mid m^\infty \\ (mn,z)=g}}\ \sum_{\substack{d\mid m \\ (dn,z)=h}}\frac{\mu(d)}{n^{r+1}d^r}.
\end{equation}

First of all, for all $n\mid m^\infty$ we have that $\rad(mn,z)=\rad(m,z)$, hence we may restrict the sum on $g\mid z$ to the divisors such that $(m,z)\mid g$ and $\rad(g)=\rad((m,z))$ both hold. To simplify the notation, let us write $m_\ell=v_\ell(m)$, and similarly for $z_\ell,g_\ell,h_\ell$. Then, by properties of the multiplicative functions, from \eqref{euler-prod-2} we obtain
\[ \varrho_{m}=\frac{1}{\varphi(m)}\cdot\sum_{\substack{g\mid z  \\ \rad(g)\mid(m,z)\mid g}} \sum_{h\mid g}\frac{\varphi(g)h^r}{[K_{g,h}:K]}\cdot
\prod_{\ell\mid m} p_\ell(g,h) \]
where for $\ell\mid m$ we define
\begin{equation}\label{eq-prod-sum}
 p_\ell(g,h):=  \sum_{\substack{s\geq 0 \\ \min(m_\ell+s,z_\ell)=g_\ell}}\ \sum_{\substack{e\in\{0,1\} \\ \min(s+e,z_\ell)=h_\ell}}  \frac{\mu(\ell^e)}{\ell^{s(r+1)}\ell^{er}}. 
\end{equation}
If $\ell\mid m$ and $\ell\nmid z$, then the two conditions on the indices are trivial and we have
\[ p_\ell(g,h)=\frac{\ell(\ell^r-1)}{\ell^{r+1}-1}. \]
This computation already justifies the first product in \eqref{euler-prod-1}. Next, we take 
\[ p(g,h):=\varphi(g)h^r \prod_{\ell\mid g}p_\ell(g,h), \]
and compute $p_\ell(g,h)$ depending on the prime factors $\ell$ of $g$ (equivalently, of $(m,z)$). 
 
\emph{Case 1: $\ell\mid g$ and $\ell\nmid h$.}  Since $\ell\nmid h$,  the conditions on the indices in \eqref{eq-prod-sum} hold only for $s=e=0$, and so $p_\ell(g,h)=1$ if $\min(m_\ell,z_\ell)=g_\ell$, and $p_\ell(g,h)=0$ otherwise.

\emph{Case 2: $\ell\mid h$ and $g_\ell<z_\ell$.} 
Since $1\leq h_\ell<z_\ell$, the conditions on the indices hold only for $s+e=h_\ell$. Therefore, if $g_\ell=m_\ell+h_\ell$, then $p_\ell(g,h)=1/\ell^{h_\ell(r+1)}$; if $g_\ell=m_\ell+h_\ell-1$, then $p_\ell(g,h)=-\ell/\ell^{h_\ell(r+1)}$; otherwise $p_\ell(g,h)=0$.

\emph{Case 3: $\ell\mid h$ and $h_\ell< g_\ell=z_\ell$.} 
The conditions on the indices hold only for $s+e=h_\ell$ and $m_\ell+s\geq z_\ell=g_\ell$. Therefore, if $m_\ell+h_\ell-1\geq z_\ell$, then $p_\ell(g,h)=-(\ell-1)/\ell^{h_\ell(r+1)}$; if $m_\ell+h_\ell= z_\ell$, then $p_\ell(g,h)=1/\ell^{h_\ell(r+1)}$; otherwise $p_\ell(g,h)=0$.

\emph{Case 4: $\ell\mid h$ and $h_\ell=z_\ell$.} 
Since $h_\ell=g_\ell=z_\ell\geq1$, the conditions on the indices hold if and only if $s+e\geq h_\ell$. Therefore, we obtain
\[ p_\ell(g,h)=-\frac{1}{\ell^{h_\ell(r+1)}}\frac{\ell^{r+1}(\ell-1)}{\ell^{r+1}-1}. \qedhere \]
\end{proof}

\section{The order being $k$-free}\label{sect-kfree} 
In this section we prove Theorem \ref{thm-kfree}. 
\begin{proof}[Proof of Theorem \ref{thm-kfree}]
If $\p$ is a prime of $K$ with $\p\notin\mc P(G)$, then $\ord_\p(G)$ is $k$-free if and only if for every rational prime $q$ we have $q^k\nmid\ord_\p(G)$. Therefore, by the inclusion-exclusion principle we have
\[ \pi_{\mc N_k}(x)=\sum_{m\geq1}\mu(m)\pi_{\mc P_{m^k}}^1(x)+\bo{\frac{\sqrt{x}}{\log x}}, \]
where $\pi_{\mc P_{m}}^1(x)$ counts the primes of degree $1$  in $\mc P_m$ with norm up to $x$ (the $O$-term estimates the primes not of degree 1). Notice that for $\pi_{\mc P_{m}}^1(x)$ we may take the same asymptotic formula \eqref{eq-divisible} as  for $\pi_{\mc P_m}(x)$. Then, for $0<a<1$ and $z:=\log^ax$ we have 
\begin{align}
\pi_{\mc N_k}(x) & = \sum_{m\leq z}\mu(m)\pi_{\mc P_{m^k}}^1(x)+\bigg(\sum_{m>z}\pi_{\mc P_{m^k}}^1(x)\bigg)+\bo{\frac{\sqrt{x}}{\log x}} \notag \\
&=\frac{x}{\log x}\beta_{C,k}+\bigg(\frac{x}{\log x}\sum_{m>z}\varrho_{C,m^k}\bigg)+\bigg(\sum_{m>z}\pi_{\mc P_{m^k}}^1(x)\bigg) \label{eq-errors1} \\
& \quad +O\Bigg(\sum_{m\leq z} x\bigg( \frac{(\log\log x)^2}{\log x} \bigg)^{1+\frac{1}{r+2}}\Bigg)+\bo{\frac{\sqrt{x}}{\log x}}\,. \label{eq-errors2}
\end{align}
By Theorem \ref{thm-divisible} we have 
\[  \sum_{m>z}\pi_{\mc P_{m^k}}^1(x)\ll_{F,K,G}\frac{x}{\log x}\sum_{m>z}\varrho_{C,m^k}. \]
By Proposition \ref{prop-convergent} and recalling that $m/\varphi(m)=O_\eta(m^{\eta})$, we have $\varrho_{C,m^k}\ll_\eta 1/m^{k-\eta}$ for every $0<\eta<1$. Hence both $O$-terms in \eqref{eq-errors1} are bounded by
\begin{equation}\label{error1}
\frac{x}{\log x}\sum_{m>z}\frac{1}{m^{k-\eta}}\ll_\eta \frac{x}{(\log x )^{1+a(k-1-\eta)}} .
\end{equation}

Next, we can bound the first error term in \eqref{eq-errors2} by
 \begin{equation}\label{error2}
(\log x)^{a}\cdot x\bigg( \frac{(\log\log x)^2}{\log x} \bigg)^{1+\frac{1}{r+2}}\leq \frac{x(\log\log x)^{3}}{(\log x)^{1+\frac{1}{r+2}-a}} ,
 \end{equation}
and we may choose $a<\frac{1}{(r+2)(k-\eta)}$ so that \eqref{error2} can be bounded by \eqref{error1}. Thus, we may decrease the exponent in the denominator of \eqref{error1} until $1+\frac{k-1}{(r+2)k+1}$, taking, e.g.\ $\eta=1/(r+4)$.
Collecting the errors yields the result.
\end{proof}

\begin{rem}
In the context of Theorem \ref{thm-kfree}, the case of groups with torsion is straightforward: if $G'$ is a finitely generated subgroup of $K^\times$ with torsion of order $t$, and $G=G'/\group{\zeta_t}$, then the density of the set $\mc N_{k,G'}$ (i.e.\ $\mc N_k$ defined for the group $G'$) is equal to the density of $\mc N_{k,G}$ if $t$ is $k$-free, and it is $0$ otherwise.
\end{rem}

Next we prove an explicit formula for the density $\beta_{C,k}$ of Theorem \ref{thm-kfree} if the condition on the Frobenius is trivial, and in this case we simply write $\beta_k$.

\begin{thm}\label{cor-kfree}
Let $K$ be a number field and let $G$ be a finitely generated and torsion-free subgroup of $K^\times$ of positive rank $r$. Let $k\geq2$ and let $\beta_k$ be the natural density of the set of primes $\p$ of $K$ such that  $\ord_\p(G)$ is $k$-free (where $\p\notin\mc P(G)$).
Then there is an integer $z$, which depends only on $K$ and $G$, such that 
\begin{equation}\label{eqcor-kfree0}
\beta_k=\prod_{\ell\nmid z}\Big(1-\frac{\ell^r-1}{(\ell-1)(\ell^{r+1}-1)\ell^{k-2}}\Big)\cdot  
\sum_{\substack{g\mid z \\ (\rad(g)^k,z)\mid g}}\sum_{h\mid g}\frac{p(g,h)}{[K_{g,h}:K]},
\end{equation}
where we set $p(g,h)=0$ if and only if at least one of the following conditions is satisfied:
\begin{itemize}
\item there is $\ell\mid g$, $\ell\nmid h$ and $v_\ell(g)\neq v_\ell((\ell^k,z))$,
\item there is $\ell\mid h$ such that $v_\ell(g/h)>k$, or $v_\ell(z/g)>0$ and $v_\ell(g/h)<k-1$;
\end{itemize}
else we define $p(g,h)$ to be
\[
\frac{g}{h\rad(g)^k}\cdot 
\prod_{\substack{\ell\mid g,\, \ell\nmid h \\ v_\ell(\frac{g}{(\ell^k,z)})=0 \\ \text{or } \ell\mid h,\, v_\ell(\frac{g}{h})=k}}  (-1)\cdot
\prod_{\substack{\ell\mid h \\ v_\ell(\frac{z}{g})>0 \\ v_\ell(\frac{g}{h})=k-1}}\ell \cdot 
\prod_{\substack{\ell\mid h \\ v_\ell(\frac{z}{g})=0 \\ 0< v_\ell(\frac{g}{h})< k}}(\ell-1) \cdot
\prod_{\substack{\ell\mid h \\  v_\ell(\frac{z}{h})=0}} \frac{\ell^{r+1}(\ell-1)}{\ell^{r+1}-1} .  \]
\end{thm}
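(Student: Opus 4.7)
The starting point is the identity $\beta_k = \sum_{m\geq 1}\mu(m)\varrho_{m^k}$, which follows from Theorem \ref{thm-kfree} in the trivial-Frobenius case. Only squarefree $m$ survive, so the plan is to substitute the rational formula for $\varrho_{m^k}$ from Theorem \ref{thm-dens-formula} (with $m$ there replaced by $m^k$, so that $v_\ell(m^k)=kv_\ell(m)\in\{0,k\}$ for squarefree $m$) and to interchange the outer sum over $m$ with the finite double sum over $g\mid z$ and $h\mid g$.

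For each squarefree $m$, I would factorize $m=m_1m_2$ with $\rad(m_1)\mid z$ and $(m_2,z)=1$. The piecewise definition of $p(g,h)$ in Theorem \ref{thm-dens-formula} depends on $m$ only through its valuations at primes of $z$, hence only on $m_1$, while the Euler-factor prefactor $\prod_{\ell\mid m,\,\ell\nmid z}\frac{\ell(\ell^r-1)}{\ell^{r+1}-1}$ and the corresponding part of $\varphi(m^k)$ depend only on $m_2$. This separates $\beta_k$ into an Euler product (coming from the $m_2$ sum) times a finite sum over $(m_1,g,h)$. Each prime $\ell\nmid z$ contributes $1-\frac{1}{\ell^{k-1}(\ell-1)}\cdot\frac{\ell(\ell^r-1)}{\ell^{r+1}-1}$, which simplifies to the Euler factor displayed in \eqref{eqcor-kfree0}.

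For the $m_1$ sum, the key observation is that for each fixed $g\mid z$ only $m_1=\rad(g)$ yields a nonzero contribution: the outer constraint $\rad(g)\mid(m^k,z)\mid g$ combined with $\rad(m_1)\mid z$ and the squarefreeness of $m_1$ forces $\rad(m_1)=\rad(g)$, hence $m_1=\rad(g)$. The sum therefore collapses to a single term per pair $(g,h)$. Using $\varphi(g)/\varphi(\rad(g)^k)=g/\rad(g)^k$ and $\mu(\rad(g))=(-1)^{\omega(g)}$, the prefactor becomes $g/(h\rad(g)^k)$, and I would distribute the sign prime-by-prime across the four cases of the proof of Theorem \ref{thm-dens-formula} (Case~1: $\ell\mid g,\,\ell\nmid h$; Cases~2--4 for $\ell\mid h$, distinguished by whether $v_\ell(z/g)$ and $v_\ell(z/h)$ vanish). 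After substituting $v_\ell(m)=1$ and $v_\ell(m^k)=k$, each per-prime factor matches exactly one of the four products in \eqref{eqcor-kfree0}, and the piecewise vanishing conditions on $p(g,h)$ translate similarly. The outer restriction $(\rad(g)^k,z)\mid g$ selects precisely those $g$ for which $p(g,h)$ can be nonzero for some $h$.

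The main obstacle is the case-by-case bookkeeping: the sign $(-1)^{\omega(g)}$ must be absorbed correctly so that Case~1 and the boundary situations $v_\ell(g/h)=k$ in Cases~2 and 3 each yield a factor $-1$, while the remaining boundary subcases and Case~4 yield the factors $\ell$, $\ell-1$, and $\ell^{r+1}(\ell-1)/(\ell^{r+1}-1)$ respectively. One also has to verify that the denominators $\ell^{h_\ell(r+1)}$ arising in Cases~2--4 collapse to $1/h^{r+1}$, which combines with $\varphi(g)/\varphi(\rad(g)^k)$ to produce exactly the prefactor $g/(h\rad(g)^k)$ in the statement. Once these matchings are checked prime-by-prime, the finite-sum identity follows by direct comparison.
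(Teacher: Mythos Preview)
Your approach is correct and leads to the same formula, but it is organized differently from the paper's proof. The paper does not invoke Theorem \ref{thm-dens-formula} at all; instead it goes back to the raw triple sum
\[
\beta_k=\sum_{m\geq1}\sum_{n\mid m^\infty}\sum_{d\mid m}\frac{\mu(m)\mu(d)}{[K_{nm^k,dn}:K]},
\]
applies the degree formula $[K_{a,b}:K]=\frac{\varphi(a)b^r}{\varphi((a,z))(b,z)^r}[K_{(a,z),(b,z)}:K]$, groups by $(g,h)=((nm^k,z),(dn,z))$, and then factors the resulting inner sum over \emph{all} primes simultaneously. So in the paper each local factor $p_\ell(g,h)$ already contains the contribution of the $m$-sum at $\ell$, whereas you first use Theorem \ref{thm-dens-formula} to dispose of the $(n,d)$-sum and only afterwards collapse the $m$-sum via the observation $m_1=\rad(g)$. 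Your route is a bit more economical because it recycles the case analysis already done for $\varrho_m$; the paper's route is more uniform and avoids having to track how the extra sign $\mu(\rad(g))=(-1)^{\omega(g)}$ distributes across the four products.

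One small point: your closing remark that ``the denominators $\ell^{h_\ell(r+1)}$ arising in Cases~2--4 collapse to $1/h^{r+1}$'' is superfluous if you are really quoting Theorem \ref{thm-dens-formula} as a packaged statement, since those factors are already absorbed into the prefactor $\varphi(g)/h$ there. You only need the identities $\varphi(g)/\varphi(\rad(g)^k)=g/\rad(g)^k$ and the per-prime absorption of the sign $(-1)^{\omega(g)}$, which you handle correctly.
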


In Section \ref{sect-ex} we point out that the obtained formula for $\beta_k$ can be rewritten as a rational multiple of a certain absolute constant $A_{k,r}$, see \eqref{Akr}, depending only on the parameters $k$ and $r$, which we evaluate in Table \ref{table-constants}.

\begin{proof}
Applying \cite[Theorem 1.1]{PST} as in the proof of Theorem \ref{thm-dens-formula} we obtain
\begin{align}
\beta_k &=\sum_{m\geq1}\sum_{\substack{n\mid m^\infty \\ d\mid m}}\frac{\mu(m)\mu(d)}{[K_{nm^k,dn}:K]} \notag \\
 &= \sum_{\substack{g\mid z\\ h\mid g}} \frac{\varphi(g)h^r}{[K_{g,h}:K]}\sum_{m\geq1}\, \sum_{\substack{n\mid m^\infty \\ (nm^k,z)=g}}\, \sum_{\substack{d\mid m \\ (dn,z)=h}}\frac{\mu(m)\mu(d)}{n^{r+1}d^r\varphi(m^k)} \notag \\
 &= \sum_{\substack{g\mid z\\ h\mid g}} \frac{\varphi(g)h^r}{[K_{g,h}:K]}\prod_{\ell} p_\ell(g,h) \label{eqcor-kfree1}
\end{align}
where for $\ell\nmid z$ (and hence $\ell\nmid g$) we have
\[ p_\ell(g,h)= 1-\frac{1}{\varphi(\ell^k)}\sum_{s\geq 0}\ \sum_{e\in\{0,1\}}  \frac{\mu(\ell^e)}{\ell^{s(r+1)+er}}=1-\frac{1}{\varphi(\ell^{k})}\frac{\ell(\ell^r-1)}{\ell^{r+1}-1}, \]
and for $\ell\mid z$, $\ell\nmid g$ we have $p_\ell(g,h)=1$, whereas for $\ell\mid g$, setting $z_\ell:=v_\ell(z)$ and similarly for $g_\ell,h_\ell$,  we have
\[
 p_\ell(g,h)= -\frac{1}{\varphi(\ell^k)}\sum_{\substack{s\geq 0 \\ \min(k+s,z_\ell)=g_\ell}}\ \sum_{\substack{e\in\{0,1\} \\ \min(s+e,z_\ell)=h_\ell}}  \frac{\mu(\ell^e)}{\ell^{s(r+1)+er}}.
\]

We take $p(g,h):=\varphi(g)h^r\prod_{\ell\mid g}p_\ell(g,h)$ (and we will make use of the identity $\varphi(g)/\varphi(\rad(g)^k)=g/\rad(g)^k$). Let us compute $p_\ell(g,h)$ depending on the prime $\ell\mid g$. If $g_\ell< \min(k,z_\ell)$, then $p_\ell(g,h)=0$, so we may restrict the sum in \eqref{eqcor-kfree1} to the divisors $g$ such that $(\rad(g)^k,z)\mid g$.

\emph{Case 1: $\ell\nmid h$.} The conditions on the indices hold only for $s=e=0$. Thus, if $g_\ell=\min(k,z_\ell)$, then  $p_\ell(g,h)=-1/\varphi(\ell^{k})$, otherwise $p_\ell(g,h)=0$.

\emph{Case 2: $\ell\mid h$ and $h_\ell=g_\ell=z_\ell$.} The sums reduce to the indices $s,e$ such that $s+e\geq h_\ell$ (recall that $k\geq2$). Hence, we have
\[ p_\ell(g,h)= \frac{1}{\varphi(\ell^{k})\ell^{(r+1)h_\ell}}\frac{\ell^{r+1}(\ell-1)}{\ell^{r+1}-1}.  \]

\emph{Case 3: $\ell\mid h$ and $h_\ell<g_\ell=z_\ell$.} The conditions on the indices become $s+e=h_\ell$ and $k+s\geq g_\ell$. Hence, we have: $p_\ell(g,h)=0$ if $g_\ell-h_\ell> k$; $p_\ell(g,h)=-1/(\varphi(\ell^{k})\ell^{(r+1)h_\ell})$ if $g_\ell-h_\ell= k$; $p_\ell(g,h)=(\ell-1)/(\varphi(\ell^{k})\ell^{(r+1)h_\ell})$ if $g_\ell-h_\ell< k$.

\emph{Case 4: $\ell\mid h$ and $g_\ell<z_\ell$.} The conditions on the indices become $s+e=h_\ell$ and $k+s=g_\ell$. Thus, we have: if $g_\ell-h_\ell=k$, then $p_\ell(g,h)=-1/(\varphi(\ell^{k})\ell^{(r+1)h_\ell})$; if $g_\ell-h_\ell=k-1$, then $p_\ell(g,h)=\ell/(\varphi(\ell^{k})\ell^{(r+1)h_\ell})$; otherwise, $p_\ell(g,h)=0$.
\end{proof}

\section{Prescribing valuations for the order}\label{sect-val}
In this section we apply Theorem \ref{thm-divisible} to prove an asymptotic formula for the number of primes $\p$ of $K$ for which the order of $(G\bmod\p)$ has  prescribed $\ell$-adic valuations for finitely many given primes $\ell$.

\begin{thm}\label{thm-val}
Let $K$ be a number field and let $G$ be a finitely generated and  torsion-free subgroup of $K^\times$ of positive rank $r$. Let $F/K$ be a finite Galois extension, and let $C$ be a conjugacy-stable subset of $\Gal(F/K)$.
Consider finitely many prime numbers $\ell$, and for each of them fix a nonnegative integer $a_\ell$. Set $k=\prod\ell$ and $m=\prod\ell^{a_\ell}$, where $\ell$ runs through the considered primes. Consider the set of primes of $K$ given by
\[ \mc V =\set{ \p: v_{\ell}(\ord_\p(G))=a_\ell \ \forall \ell\mid k,\,\begin{pmatrix}\p \\ F/K  \end{pmatrix}\subseteq C },  \]
where we are assuming that $\p\notin\mc P(G)$ and $\p$ does not ramify in $F$.
Then  we have
\[
\pi_{\mc V}(x)=\frac{x}{\log x}\sum_{f\mid k}\mu(f)\varrho_{C,mf} +O\left( \tau(k)x\bigg( \frac{(\log\log x)^2}{\log x} \bigg)^{1+\frac{1}{r+2}} \right), 
\]
where, for $t\geq1$, $\varrho_{C,t}$ is as in \eqref{eq-div-dens}, hence the set $\mc V$ has natural density
\[
\gamma_{C,k,m}:=\sum_{f\mid k}\,\sum_{n\mid (fm)^\infty}\,\sum_{d\mid fm}\frac{\mu(f)\mu(d)c(fmn,dn)}{[F_{fmn,dn}:K]}
\]
(with $c(a,b)$ as in \eqref{coeff-c1}). The constant implied by the $O$-term depends only on $F$, $K$, $G$.
\end{thm}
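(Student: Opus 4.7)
The plan is to reduce the prescribed-valuation problem to a finite collection of divisibility problems already handled by Theorem \ref{thm-divisible}, via inclusion--exclusion. First I would rewrite membership in $\mc V$: for each $\ell\mid k$, the condition $v_\ell(\ord_\p(G))=a_\ell$ is equivalent to the conjunction of $\ell^{a_\ell}\mid\ord_\p(G)$ and $\ell^{a_\ell+1}\nmid\ord_\p(G)$. Taking the conjunction over all $\ell\mid k$, the lower-bound conditions collapse to the single divisibility $m\mid\ord_\p(G)$, since $m=\prod_\ell \ell^{a_\ell}$. Thus $\p\in\mc V$ exactly when $m\mid\ord_\p(G)$, the Frobenius condition holds, and for no $\ell\mid k$ does $\ell^{a_\ell+1}\mid\ord_\p(G)$.

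Next, I would apply the inclusion--exclusion principle to the ``forbidden'' events $E_\ell:=\{\ell^{a_\ell+1}\mid \ord_\p(G)\}$ as $\ell$ ranges over the prime divisors of $k$. For a subset $S\subseteq\{\ell:\ell\mid k\}$, the simultaneous occurrence of all $E_\ell$ for $\ell\in S$ together with $m\mid\ord_\p(G)$ is equivalent to $mf\mid\ord_\p(G)$, where $f:=\prod_{\ell\in S}\ell$ is a (squarefree) divisor of $k$. Parametrizing $S$ by such $f$ and noting $\mu(f)=(-1)^{\omega(f)}$, this yields the clean identity
\[
\pi_{\mc V}(x)=\sum_{f\mid k}\mu(f)\,\pi_{\mc P_{mf}}(x),
\]
where $\mc P_{mf}$ is the set from Theorem \ref{thm-divisible} (with the same Frobenius condition $C$).

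Then I would substitute the asymptotic \eqref{eq-divisible} into each summand. The main terms aggregate to $\frac{x}{\log x}\sum_{f\mid k}\mu(f)\varrho_{C,mf}$, which, after expanding each $\varrho_{C,mf}$ as in \eqref{eq-div-dens}, gives precisely the triple sum $\gamma_{C,k,m}$ stated (convergence is immediate since it is a finite $\Z$-linear combination of convergent series, each controlled by Proposition \ref{prop-convergent}). The error terms combine to $\tau(k)$ times the error of Theorem \ref{thm-divisible}, yielding the displayed bound. Since $F$, $K$, $G$ determine the implied constant in Theorem \ref{thm-divisible}, and $\tau(k)$ is an absolute multiplicative factor (independent of $x$), the final implied constant depends only on $F$, $K$, $G$ as required.

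The only step that deserves attention is verifying that the inclusion--exclusion is correctly applied under the auxiliary Frobenius constraint $(\p,F/K)\subseteq C$: this is immediate because the constraint is identical in every term of the sum and is independent of the divisibility conditions being combined. No further input beyond Theorem \ref{thm-divisible} is needed, so there is no genuine obstacle, merely bookkeeping.
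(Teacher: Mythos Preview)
Your proposal is correct and matches the paper's own proof essentially verbatim: the paper likewise observes that $\p\in\mc V$ iff $m\mid\ord_\p(G)$ and $m\ell\nmid\ord_\p(G)$ for every $\ell\mid k$, applies inclusion--exclusion to obtain $\pi_{\mc V}(x)=\sum_{f\mid k}\mu(f)\pi_{\mc P_{mf}}(x)$, and then invokes Theorem~\ref{thm-divisible} termwise. Your additional remarks on the Frobenius constraint and on convergence via Proposition~\ref{prop-convergent} are accurate and make the argument slightly more explicit than the paper's one-line proof.
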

It follows from Proposition \ref{prop-convergent} that the series $\gamma_{C,k,m}$ is convergent.

\begin{proof}
We must have that $\ord_\p(G)$ is divisible by $m$ and not by $m\ell$ for any prime factor $\ell$ of $k$. Hence applying the inclusion-exclusion principle and Theorem \ref{thm-divisible} we obtain the desired formula.
\end{proof}

Notice that $\gamma_{C,k,m}$ is given by a finite sum of terms of the form $\pm\varrho_{C,t}$. 
In the following  we provide an explicit formula for the special case of trivial condition on the Frobenius.

\begin{thm}\label{cor-val}
Let $K$ be a number field and let $G$ be a torsion-free subgroup of $K^\times$ of positive rank $r$.
Let $k,m\geq1$ be integers with $k$ squarefree and $\rad(m)\mid k$. Let $\gamma_{k,m}$ be the natural density  of the set of primes $\p$ of $K$ such that $v_\ell(\ord_\p(G))=v_\ell(m)$ for all $\ell\mid k$
(where $\p\notin\mc P(G)$).
Then there is an integer $z$, which depends only on $K$ and $G$, such that 
\[
\gamma_{k,m}=\frac{1}{\varphi(m)}
\prod_{\substack{\ell\mid m \\ \ell\nmid z}}\frac{(\ell-1)(\ell^r-1)}{\ell^{r+1}-1}
\prod_{\substack{\ell\mid k \\ \ell\nmid mz}}\bigg(1-\frac{\ell(\ell^r-1)}{(\ell^{r+1}-1)(\ell-1)}\bigg)
  \sum_{\substack{g\mid z \\ (m,z)\mid g\\ \rad(g)\mid k}}\sum_{h\mid g}\frac{p(g,h)}{[K_{g,h}:K]},
\]
where, for $h\mid g$, we set $p(g,h)=0$ if and only if at least one of the following conditions holds: 
\begin{itemize}
\item there is $\ell\mid (k,g)$, $\ell\nmid m$, such that $v_\ell(g/h)>1$,
\item there is $\ell\mid (g,m)$, $\ell\nmid h$, such that we have $v_\ell(g)\notin\{v_\ell(z),v_\ell(m),v_\ell(m)+1  \}$, or $v_\ell(g)=v_\ell(z)>v_\ell(m)+1$,
\item there is $\ell\mid (h,m)$, such that we have $v_\ell(g/h)>v_\ell(m)+1$, or we have  $v_\ell(z/g)>0$ and  $v_\ell(g/h)<v_\ell(m)-1$;
\end{itemize}
else we define $p(g,h)=\frac{\varphi(g)}{h}\cdot q_1(g,h)q_2(g,h)$, with
\[ q_1(g,h) = 
\prod_{\substack{\ell\mid \frac{g}{h} \\ \ell\nmid m}}\frac{-1}{\ell-1} \cdot
\prod_{\substack{\ell\mid h,\, \ell\nmid m \\ v_\ell(\frac{z}{h})=0}}
\frac{\ell^{r+1}}{\ell^{r+1}-1} \cdot 
 \prod_{\substack{\ell\mid h,\, \ell\nmid m \\ v_\ell(h)=v_\ell(g)<v_\ell(z)}} \frac{\ell}{\ell-1} \]
(the primes involved in these products are coprime with $m$), and
\begin{multline*}
q_2(g,h) =  
\prod_{\substack{\ell\mid (h,m) \\ v_\ell(\frac{z}{h})= 0}} \frac{-\ell^{r}(\ell-1)^2}{\ell^{r+1}-1}\cdot
\prod_{\substack{\ell\mid (g,m) \\ v_\ell(\frac{g}{h})=v_\ell(m)+1}}\frac{-1}{\ell}\cdot
\prod_{\substack{\ell\mid g,\, \ell\nmid h  \\  v_\ell(g)=v_\ell(z)\leq v_\ell(m) }}\frac{\ell-1}{\ell}\,\cdot 
\\
\cdot\prod_{\substack{\ell\mid (h,m) \\ v_\ell(\frac{z}{g})>0 \\ v_\ell(\frac{g}{h})=v_\ell(m)}} 2
\cdot\prod_{\substack{\ell\mid h,\, v_\ell(\frac{z}{g})=0 \\ 0< v_\ell(\frac{g}{h})< v_\ell(m)}}\frac{-(\ell-1)^2}{\ell}\cdot
\prod_{\substack{\ell\mid h,\, v_\ell(\frac{z}{g})=0 \\  v_\ell(\frac{g}{h})= v_\ell(m)>0}}\frac{2\ell-1}{\ell}\cdot
\prod_{\substack{\ell\mid h,\, v_\ell(\frac{z}{g})>0 \\ v_\ell(\frac{g}{h})=v_\ell(m)-1}} -\ell
\end{multline*}
(the primes involved in these products are prime factors of $m$).
\end{thm}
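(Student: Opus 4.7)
The plan is to combine Theorem \ref{thm-val} (taken with trivial Frobenius condition, i.e.\ $C=\Gal(F/K)$) with the PST degree formula \cite[Theorem 1.1]{PST}, following the strategy of Theorems \ref{thm-dens-formula} and \ref{cor-kfree}. Specializing Theorem \ref{thm-val} gives $\gamma_{k,m}=\sum_{f\mid k}\mu(f)\varrho_{mf}$. Expanding each $\varrho_{mf}$ via \eqref{eq-div-dens} and factoring out $[K_{(mfn,z),(dn,z)}:K]$ from $[K_{mfn,dn}:K]$ by \cite[Theorem 1.1]{PST}, then reindexing by $g:=(mfn,z)$ and $h:=(dn,z)$, one obtains
\[ \gamma_{k,m}=\sum_{\substack{g\mid z\\ h\mid g}}\frac{\varphi(g)h^r}{[K_{g,h}:K]}\prod_{\ell} p_\ell(g,h), \]
where each local factor $p_\ell(g,h)$ collects the $\ell$-adic contributions of the $(f,n,d)$-summations subject to the constraints $(mfn,z)=g$ and $(dn,z)=h$.

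Next, $p_\ell(g,h)=1$ whenever $\ell\nmid mkz$, and the remaining primes fall into three families. For $\ell\mid k$ with $\ell\nmid mz$, the $f_\ell$-sum against $\mu(f_\ell)$ (with $f_\ell\in\{0,1\}$) yields the factor $1-\frac{\ell(\ell^r-1)}{(\ell-1)(\ell^{r+1}-1)}$ corresponding to the second product of the prefactor in the statement. For $\ell\mid m$ with $\ell\nmid z$ (forcing $\ell\mid k$ since $\rad(m)\mid k$), the $f_\ell$-sum collapses because $v_\ell(mf)=v_\ell(m)$ is independent of $f_\ell$, and one reads off $\frac{(\ell-1)(\ell^r-1)}{\ell^{r+1}-1}$ up to the overall factor $1/\varphi(m)$. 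For $\ell\mid z$, the computation is substantially more intricate and produces the $(g,h)$-dependent factor $p(g,h)$.

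For a prime $\ell\mid z$, I would perform a case analysis on the pair of saturation conditions $\min(v_\ell(mf)+v_\ell(n),v_\ell(z))=v_\ell(g)$ and $\min(v_\ell(n)+v_\ell(d),v_\ell(z))=v_\ell(h)$, with $v_\ell(d)\in\{0,1\}$ and, when $\ell\nmid m$, with $v_\ell(f)\in\{0,1\}$. The Möbius vanishings force $p_\ell(g,h)=0$ in all configurations for which $(v_\ell(g),v_\ell(h))$ fails the corresponding admissibility inequalities, which after enumeration are precisely the three listed conditions in the statement. In each surviving configuration, one evaluates the finite geometric sum in $v_\ell(n)$ and multiplies by the appropriate $\mu$-values; after absorbing $\varphi(g)h^r$ as $(\varphi(g)/h)\cdot h^{r+1}$ (the latter cancelling the $\ell^{-(r+1)h_\ell}$ accumulated along the way), one recovers exactly the factors appearing in $q_1(g,h)$ (for $\ell\nmid m$) and $q_2(g,h)$ (for $\ell\mid m$).

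The main obstacle is the case analysis for $\ell\mid z$: because $k$ is squarefree while $m$ and $z$ may have arbitrary $\ell$-adic valuations, the interaction between the $\mu(f)$-sum (supported on $f_\ell\in\{0,1\}$), the $\mu(d)$-sum, and the saturation thresholds at $v_\ell(z)$ generates a proliferation of sub-cases, especially at the boundary $v_\ell(g)=v_\ell(z)$ and near the thresholds $v_\ell(g/h)\in\{v_\ell(m)-1,v_\ell(m),v_\ell(m)+1\}$. Once this bookkeeping is completed, the products defining $q_1$ and $q_2$ emerge by exact analogy with Cases 1--4 in the proofs of Theorems \ref{thm-dens-formula} and \ref{cor-kfree}.
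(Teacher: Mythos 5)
Your proposal follows the paper's proof essentially verbatim: specialize Theorem \ref{thm-val} to the trivial Frobenius condition, factor the Kummer degrees via \cite[Theorem 1.1]{PST}, reindex by $g=(fmn,z)$ and $h=(dn,z)$ to get $\gamma_{k,m}=\sum_{g\mid z,\,h\mid g}\frac{\varphi(g)h^r}{[K_{g,h}:K]}\prod_\ell p_\ell(g,h)$, and evaluate the local factors by exactly the case distinction the paper uses (primes dividing $k$ but not $mz$, primes dividing $m$ but not $z$, and primes dividing $z$, the last split according to whether $\ell\nmid h$, $h_\ell=g_\ell=z_\ell$, $h_\ell<g_\ell=z_\ell$, or $g_\ell<z_\ell$). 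One small imprecision: for $\ell\mid m$ with $\ell\nmid z$ the $f_\ell$-sum does not ``collapse'' ($v_\ell(mf)=v_\ell(m)+v_\ell(f)$ does depend on $f_\ell$); rather, the saturation constraint is vacuous since $v_\ell(z)=0$, so the unconstrained sum $\sum_{e\in\{0,1\}}\mu(\ell^e)\ell^{-e}=1-1/\ell$ is precisely what turns the factor $\frac{\ell(\ell^r-1)}{\ell^{r+1}-1}$ of Theorem \ref{thm-dens-formula} into the $\frac{(\ell-1)(\ell^r-1)}{\ell^{r+1}-1}$ you correctly state.
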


The proof is similar to that of Theorem \ref{thm-dens-formula}: one needs to first apply \cite[Theorem 1.1]{PST}, then transform the obtained inner sums into a product on the prime factors $\ell$ of $k$, and compute these through a certain case distinction. For the convenience of the reader, we give all details below.

\begin{proof}
Applying \cite[Theorem 1.1]{PST} as  in the proof of Theorem \ref{thm-dens-formula}, we obtain
\begin{align*}
\gamma_{k,m} & = \sum_{f\mid k}\ \sum_{n\mid (fm)^\infty}\ \sum_{d\mid fm}\frac{\mu(f)\mu(d)}{[K_{fmn,dn}:K]} \\
&  = \sum_{\substack{g\mid z\\ h\mid g}} \frac{\varphi(g)h^r}{[K_{g,h}:K]}
 \ \sum_{f\mid k}\ \sum_{\substack{n\mid (fm)^\infty \\ (fmn,z)=g}} \ \sum_{\substack{d\mid fm \\ (dn,z)=h}}\frac{\mu(f)\mu(d)}{\varphi(fmn)(nd)^r} .
\end{align*}
Write $m=\prod_{\ell\mid k}\ell^{a_\ell}$ with $a_\ell\geq0$.
Notice that the inner sums do not vanish only if $(m,z)\mid g$ and $\rad(g)\mid k$. Thus we reduce the formula to
\[ \gamma_{k,m} =\frac{1}{\varphi(m)}\cdot \sum_{\substack{g\mid z \\ (m,z)\mid g \\ \rad(g)\mid k}}\sum_{h\mid g} \frac{\varphi(g)h^r}{[K_{g,h}:K]}\cdot\prod_{\ell\mid k} p_\ell(g,h), \]
where for $\ell\mid k$ and $a_\ell=0$ we define $p_\ell(g,h)$ according to a case distinction on the prime factors $\ell$ of $k$. Set $z_\ell:=v_\ell(z)$, and similarly for $g_\ell,h_\ell$.

\emph{Case 1: $\ell\mid k$ and $a_\ell=0$.} If $\ell\nmid z$, then we set
\[ p_\ell(g,h):=1-\frac{1}{\ell-1}\sum_{s\geq0}\ \sum_{t\in\{0,1\}}  \frac{\mu(\ell^t)}{\ell^{s(r+1)+tr}}
=1-\frac{\ell(\ell^r-1)}{(\ell-1)(\ell^{r+1}-1)};  \]
if $\ell\mid z$ and $\ell\nmid g$, then $p_\ell(g,h):=1$; if $\ell\mid g$, then we set
\[ p_\ell(g,h):=-\frac{1}{\ell-1}\sum_{\substack{s\geq0\\ \min(s+1,z_\ell)=g_\ell }}\ \sum_{\substack{t\in\{0,1\} \\ \min(s+t,z_\ell)=h_\ell}}  \frac{\mu(\ell^t)}{\ell^{s(r+1)+tr}}. \]
We take $q_1(g,h):=\prod_{\ell\mid (k,g),\,a_\ell=0}p_\ell(g,h)\ell^{h_\ell(r+1)}$, and we compute $p_\ell(g,h)$ depending on the prime factor $\ell$ of $(k,g)$.

\emph{Case 1.1: $\ell\nmid h$.} We must have $s=t=0$, hence $p_\ell(g,h)=-1/(\ell-1)$ if $g_\ell=1$, otherwise $p_\ell(g,h)=0$.

\emph{Case 1.2: $\ell\mid h$ and  $h_\ell=g_\ell=z_\ell$.} The conditions on the indices hold only for $s+t\geq z_\ell$. We obtain $p_\ell(g,h)=\frac{1}{\ell^{h_\ell(r+1)}}\frac{\ell^{r+1}}{\ell^{r+1}-1}$.

\emph{Case 1.3: $\ell\mid h$ and  $h_\ell<g_\ell=z_\ell$.} We must have $s+t=h_\ell$ and $s+1\geq z_\ell$, hence $p_\ell(g,h)=-1/(\ell^{h_\ell(r+1)}(\ell-1))$ if $g_\ell-h_\ell=1$, and $p_\ell(g,h)=0$ otherwise.

\emph{Case 1.4: $\ell\mid h$ and  $g_\ell<z_\ell$.} We must have $s=g_\ell-1$. Thus, we obtain: $p_\ell(g,h)=-1/(\ell^{h_\ell(r+1)}(\ell-1))$ if $g_\ell-h_\ell=1$; $p_\ell(g,h)=\ell/(\ell^{h_\ell(r+1)}(\ell-1))$ if $g_\ell=h_\ell$; $p_\ell(g,h)=0$ otherwise.

\emph{Case 2: $\ell\mid k$ and $a_\ell>0$.} Recall that $g$ is such that $g_\ell\geq \min(a_\ell,z_\ell)$. We set
\[
 p_\ell(g,h):=\sum_{e\in\{0,1\}}\ \sum_{\substack{ s\geq0\\ \min(e+a_\ell+s,z_\ell)=g_\ell}}\ \sum_{\substack{t\in\{0,1\} \\ \min(s+t,z_\ell)=h_\ell} } \frac{\mu(\ell^e)\mu(\ell^t)}{\ell^{s(r+1) +e+tr}}. 
\]
If $\ell\nmid z$, then we have  $p_\ell(g,h)=\frac{(\ell-1)(\ell^r-1)}{\ell^{r+1}-1}$.
Before dealing with the case $\ell\mid z$, we set $q_2(g,h):=\prod_{\ell\mid (k,z),\,a_\ell>0}p_\ell(g,h)\ell^{h_\ell(r+1)}$, and we compute $p_\ell(g,h)$ depending on the prime factor $\ell$ of $(k,z)$.

\emph{Case 2.1: $\ell\nmid h$.} We must have $s=t=0$ and hence we have: $p_\ell(g,h)=0$ if $g_\ell\notin \{z_\ell,a_\ell,a_\ell+1\}$, or $g_\ell=z_\ell>a_\ell+1$; $p_\ell(g,h)=1$ if $g_\ell=a_\ell<z_\ell$; $p_\ell(g,h)=-1/\ell$ if $g_\ell=a_\ell+1$; $p_\ell(g,h)=1-1/\ell$ if $g_\ell=z_\ell\leq a_\ell$.

\emph{Case 2.2: $\ell\mid h$ and $h_\ell=g_\ell=z_\ell$.} The conditions on the indices hold only for $s\geq h_\ell$, or $s=h_\ell-1$ and $t=1$ (as $a_\ell\geq1$). We obtain 
\[ p_\ell(g,h)=\Big(1-\frac{1}{\ell}\Big)\Big(\Big(1-\frac{1}{\ell^r}\Big)\frac{1}{\ell^{h_\ell(r+1)}}\sum_{s\geq 0}\frac{1}{\ell^{s(r+1)}}-\frac{\ell}{\ell^{h_\ell}}  \Big)=-\frac{\ell^r(\ell-1)^2}{\ell^{h_\ell(r+1)}(\ell^{r+1}-1)}. \]

\emph{Case 2.3: $\ell\mid h$ and $h_\ell<g_\ell=z_\ell$}. Then the conditions are satisfied only if  $s+e+a_\ell\geq z_\ell$. We deduce that $p_\ell(g,h)$ equals: $-(\ell-1)^2/\ell^{h_\ell(r+1)+1}$ if $g_\ell-h_\ell<a_\ell$; $(2\ell-1)/\ell^{h_\ell(r+1)+1}$ if $g_\ell-h_\ell=a_\ell$; $-1/\ell^{h_\ell(r+1)+1}$ if $g_\ell-h_\ell= a_\ell+1$; $0$ if $g_\ell-h_\ell> a_\ell+1$.

\emph{Case 2.4: $\ell\mid h$ and $g_\ell<z_\ell$}. The conditions are satisfied  only if $a_\ell+s+e=g_\ell$. Thus, $p_\ell(g,h)$ equals: $0$ if $g_\ell-h_\ell\notin \{a_\ell-1,a_\ell,a_\ell+1\}$; $-\ell/\ell^{h_\ell(r+1)}$  if $g_\ell-h_\ell=a_\ell-1$; $2/\ell^{h_\ell(r+1)}$ if $g_\ell-h_\ell=a_\ell$; $-1/\ell^{h_\ell(r+1)+1}$  if $g_\ell-h_\ell=a_\ell+1$.

Finally, take $p(g,h):=\frac{\varphi(g)}{h}\cdot q_1(g,h)q_2(g,h)$.
\end{proof}

\begin{cor}
Let $K$ be a number field and let $G$ be a torsion-free subgroup of $K^\times$ of positive rank $r$.
Let $k\geq1$ be a squarefree integer. The natural density of the set of primes $\p$ of $K$ such that $\ord_\p(G)$ is coprime to $k$ and $(\p,F/K)\subseteq C$ (where $\p\notin\mc P(G)$ and $\p$ does not ramify in $F$) is given by
\[ \sum_{f\mid k}\mu(f)\varrho_{C,f}, \]
where, for $t\geq1$, $\varrho_{C,t}$ is as in \eqref{eq-div-dens}.
Moreover, supposing that the condition on the Frobenius is trivial, there is an integer $z$, which depends only on $K$ and $G$, such that this density equals
\[
\prod_{\substack{\ell\mid k \\ \ell\nmid z}}\bigg(1-\frac{\ell(\ell^r-1)}{(\ell^{r+1}-1)(\ell-1)}\bigg)
\cdot  \sum_{\substack{g\mid z \\  \rad(g)\mid k}}\sum_{h\mid g }
\mu\left( \frac{g}{h} \right)
\frac{p(g,h)}{[K_{g,h}:K]},
\]
where, for $h\mid g$, we set 
\[ p(g,h) = \frac{\varphi(g)}{\varphi(g/h)h}\cdot
\prod_{\substack{\ell\mid h \\ \ell \nmid \frac{z}{h}}} \frac{\ell^{r+1}}{\ell^{r+1}-1} \cdot 
 \prod_{\substack{\ell\mid h \\ v_\ell(h)=v_\ell(g)<v_\ell(z)}} \frac{\ell}{\ell-1}. \]
\end{cor}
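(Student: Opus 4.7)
The plan is to deduce both assertions by specialising Theorems \ref{thm-val} and \ref{cor-val} to the case $m=1$, observing that the hypothesis ``$\ord_\p(G)$ coprime to $k$'' is, for squarefree $k$, equivalent to requiring $v_\ell(\ord_\p(G))=0$ for every prime $\ell\mid k$, i.e.\ $a_\ell=0$ in the notation of Theorem \ref{thm-val}.

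First I would verify the closed expression $\sum_{f\mid k}\mu(f)\varrho_{C,f}$. This is immediate from Theorem \ref{thm-val}: with $a_\ell=0$ for all $\ell\mid k$ we have $m=1$, the set $\mc V$ coincides with the set in the corollary, and the formula for $\gamma_{C,k,1}$ collapses to $\sum_{f\mid k}\mu(f)\varrho_{C,f}$ upon recognising the inner double sum as the definition of $\varrho_{C,f}$ in \eqref{eq-div-dens}.

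Next, under the trivial Frobenius condition, I would apply Theorem \ref{cor-val} with $m=1$ (the hypothesis $\rad(m)\mid k$ is then trivial). The global prefactor $1/\varphi(m)$ equals $1$ and the product $\prod_{\ell\mid m,\,\ell\nmid z}$ is empty, so only the product over $\ell\mid k$, $\ell\nmid z$ survives; the constraint $(m,z)\mid g$ becomes vacuous. The main task is to simplify $p(g,h)$: among the three vanishing conditions listed in Theorem \ref{cor-val}, the two that involve primes $\ell\mid m$ are now vacuous, and the first reduces---using $\rad(g)\mid k$---to ``$g/h$ is not squarefree''. This is precisely the support of $\mu(g/h)$, which is why the M\"obius factor can be pulled out of the sum.

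For the nonzero terms (i.e.\ $g/h$ squarefree), the factor $q_2(g,h)$ of Theorem \ref{cor-val}---whose products all range over primes dividing $m$---reduces to an empty product and equals $1$, while in $q_1(g,h)$ the piece that depends on $g/h$, namely $\prod_{\ell\mid g/h}\frac{-1}{\ell-1}$, equals $\mu(g/h)/\varphi(g/h)$ by squarefreeness of $g/h$. Substituting these simplifications into $p(g,h)=\frac{\varphi(g)}{h}\,q_1(g,h)\,q_2(g,h)$ and extracting $\mu(g/h)$ yields exactly the closed form claimed in the corollary. The argument is routine; the only real obstacle is the bookkeeping of the many case distinctions in Theorem \ref{cor-val}, verifying one by one that every factor attached to a prime $\ell\mid m$ is vacuous and that no case restriction outside the squarefree-$g/h$ regime survives.
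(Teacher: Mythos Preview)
Your proposal is correct and follows essentially the same approach as the paper: both parts are obtained by specialising Theorems \ref{thm-val} and \ref{cor-val} to $m=1$, with the same simplifications (vacuous $q_2$, the vanishing condition collapsing to ``$g/h$ not squarefree'', and $\prod_{\ell\mid g/h}\tfrac{-1}{\ell-1}=\mu(g/h)/\varphi(g/h)$). The paper's own proof is slightly terser but makes exactly these observations.
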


\begin{proof}
The first part of the statement follows directly from Theorem \ref{thm-divisible} and it is a special case of Theorem \ref{thm-val}.
As for the second part, it suffices to take $m=1$ in Theorem \ref{cor-val}. Clearly we have $q_2(g,h)=1$. We obtain that $p(g,h)=0$ if and only if $g/h$ is not squarefree, and this together with the factor $\prod_{\ell\mid \frac{g}{h}}(-1)$ in $q_1(g,h)$ yields the term $\mu(g/h)$ in the formula. Also, the factor $\prod_{\ell\mid \frac{g}{h}}1/(\ell-1)$ in $q_1(g,h)$ is equal to $1/\varphi(g/h)$.
\end{proof}

\section{Conditional results assuming GRH}\label{sect-GRH}
In this section we show how Theorems \ref{thm-divisible} and \ref{thm-kfree} can be improved if we assume  GRH for the Dedekind zeta functions of number fields of the type $K_{m,n}$. In fact, in this case we can make use of the stronger version of the Chebotarev density theorem, namely \cite[Th\'eor\`eme 4]{serre} or \cite[Theorem 2]{zieg}, and we obtain smaller error terms. Let us first apply this theorem to cyclotomic-Kummer extensions of $K$.

\begin{lem}\label{CDT-GRH}
Assume GRH.
Let $F/K$, $G$, $C$ and   $\pi_{F_{m,n}/K,C_{m,n}}(x)$ be as in Theorem \ref{thm-chebTZ-kummer}. Then we have
\begin{equation}\label{CDT-GRH-eq}
\pi_{F_{m,n}/K,C_{m,n}}(x)=\frac{\av{C_{m,n}}}{[F_{m,n}:K]}\Li(x)+O_{F,G}\left( \sqrt{x}\log(mx) \right).
\end{equation}
\end{lem}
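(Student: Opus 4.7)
The plan is to apply the GRH-conditional Chebotarev density theorem to the Galois extension $F_{m,n}/K$ together with the conjugacy-stable subset $C_{m,n}\subseteq\Gal(F_{m,n}/K)$ defined in \eqref{eq-Cmn}, and then to control the arithmetic invariants appearing in the error term via Proposition~\ref{prop-disc}.

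Under GRH, the refined effective Chebotarev density theorem of Serre (\cite[Théorème 4]{serre}; see also \cite[Theorem 2]{zieg}) gives, for any Galois extension $L/K$ of number fields and any conjugacy-stable subset $C\subseteq\Gal(L/K)$, an estimate of the form
\[
\pi_{L/K,C}(x) = \frac{\av{C}}{[L:K]}\Li(x) + \bo{\av{C}\sqrt{x}\Bigl(\tfrac{\log\av{d_L}}{[L:\Q]}+\log x\Bigr)}.
\]
Specializing to $L=F_{m,n}$ and $C=C_{m,n}$, Proposition~\ref{prop-disc} yields the uniform estimate
\[
\frac{\log\av{d_{F_{m,n}}}}{[F_{m,n}:\Q]}\leq \log(\varphi(m)mn^r)+\log\bigl(\sigma_G\av{d_F}\bigr)\ll_{F,G}\log m,
\]
where the hypothesis $n\mid m$ absorbs the factor $n^r$ into $m^{r+1}$, and $\sigma_G\av{d_F}$ is a constant depending only on $F$ and $G$. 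Since $\av{C_{m,n}}\leq\av{C}\leq[F:K]$ is bounded in terms of $F$, combining these two ingredients gives an error of size $O_{F,G}(\sqrt{x}(\log m+\log x))$, which is the claimed bound as $\log m+\log x\ll\log(mx)$.

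No genuine obstacle arises: the argument is simply a substitution of Proposition~\ref{prop-disc} into the known conditional Chebotarev estimate. The only feature worth underlining is that the factor $[F_{m,n}:\Q]$ that would appear in a Lagarias--Odlyzko style bound is cancelled by the denominator in Serre's sharper form, and the remaining quantity $\log\av{d_{F_{m,n}}}/[F_{m,n}:\Q]$ is uniformly $\bo{\log m}$ as soon as $n\mid m$. This is precisely what ensures that the error term depends only on $m$ and $x$, and not on the Kummer parameter $n$, which is what will be needed in the later GRH-conditional versions of Theorems~\ref{thm-divisible} and \ref{thm-kfree}.
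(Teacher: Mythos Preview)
Your proof is correct and follows essentially the same route as the paper: apply Serre's GRH-conditional Chebotarev theorem to $F_{m,n}/K$, bound $|C_{m,n}|$ by $[F:K]$, and use Proposition~\ref{prop-disc} (with $F$ in place of $K$) to reduce $\log|d_{F_{m,n}}|/[F_{m,n}:\Q]$ to $O_{F,G}(\log m)$. The only cosmetic difference is how the Serre error term is written before simplification.
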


\begin{proof}
Applying \cite[Th\'eor\`eme 4]{serre} we have
\[ \frac{\av{C_{m,n}}}{[F_{m,n}:K]}\Big(\Li(x)+\bo{\sqrt{x}\log\big( \av{d_{F_{m,n}}}x^{[F_{m,n}:\Q]} \big)}\Big). \]
Recalling that $\av{C_{m,n}}\leq[F:K]$ and applying Proposition \ref{prop-disc}, we can reduce the error term to
\[ O_F\bigg(\sqrt{x}\cdot \frac{\log\av{d_{F_{m,n}}}}{[F_{m,n}:\Q]}+\sqrt{x}\log x\bigg)=O_{F,G}\left( \sqrt{x}\log m + \sqrt{x}\log x\right). \qedhere \]
\end{proof}

\begin{thm}\label{thm-divisGRH}
Assume GRH.
With the setup of Theorem \ref{thm-divisible},  we have
\[
\pi_{\mc P_m}(x)=\Li(x)\varrho_{C,m} +O_{F,K,G}( x^{3/4}\log x ).
\]
\end{thm}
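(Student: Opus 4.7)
The approach parallels the proof of Theorem \ref{thm-divisible}, with the unconditional Chebotarev estimate (Theorem \ref{thm-chebTZ-kummer}) replaced by its GRH counterpart (Lemma \ref{CDT-GRH}). Since the latter yields an error of order $O_{F,G}(\sqrt{x}\log(mx))$ rather than a mere logarithmic saving, the delicate three-scale splitting ($y, w, z$) required in the unconditional argument collapses to a single cutoff, which I would take to be $y := x^{1/4}$.

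First I would invoke Lemma \ref{lem-key-id} to reduce the problem to estimating
$$\sum_{n \mid m^\infty}\sum_{d \mid m} \mu(d)\pi^1_{mn,dn}(x),$$
and split this according to whether $nm \leq y$ or $nm > y$. For the pairs with $nm \leq y$ I would apply Lemma \ref{CDT-GRH}, noting that $|C_{mn,dn}| = c(mn,dn)$ and that $\pi^1_{mn,dn}(x)$ differs from $\pi_{F_{mn,dn}/K,C_{mn,dn}}(x)$ by $O(\sqrt{x}/\log x)$ (absorbed into the remainder). Since the number of such $(n,d)$ is bounded by $\tau(m)\cdot y/m$, and $\tau(m) \leq m$, the cumulative error is $O_{F,G}(x^{3/4}\log x)$, uniformly in $m$. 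Completing the truncated main term into the full series $\Li(x)\varrho_{C,m}$ introduces an extra contribution which, via Remark \ref{rem-bounds} and the bound $[K_{mn,n}:K] \gg \varphi(m)n^{r+1}$ (see \cite[Theorem 3.1]{PS1}), is $O_{K,G}(x/(y\log x)) = O(x^{3/4}/\log x)$, exactly as in the estimate \eqref{p-error1} of the unconditional proof.

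It then remains to control the tail $\sum_{nm>y,\,n\mid m^\infty}\pi_{\mc B_n}(x)$. Using the bound $\pi_{\mc B_n}(x) \leq \pi^1_{mn,n}(x)$ from Remark \ref{rem-bounds} and applying Lemma \ref{CDT-GRH} once more, I would split this tail into a main-term part, bounded by $\ll x/(y\log x)$, plus a Chebotarev-error part. For the latter it is cleaner to revert to Brun--Titchmarsh combined with the estimate \eqref{eqlemma-pappa} (taking $c$ close to $1$ as in Remark \ref{rem-pappa}), which shows that the relevant tail is dominated by $O(x^{3/4}\log x)$. Collecting all the contributions delivers the asymptotic formula in the statement.

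The main obstacle is ensuring that every implicit constant is independent of $m$, so that the bound $O_{F,K,G}(x^{3/4}\log x)$ is genuinely uniform. Quantities such as $\tau(m)/m$, $m/\varphi(m) \ll \log\log m$, and the number of divisors of $m^\infty$ up to $x$ appear in several estimates, and the cutoff $y=x^{1/4}$ is precisely the value that balances the Chebotarev error (scaling like $(y/m)\sqrt{x}\log x$) against the tail of the density series (scaling like $x/(y\log x)$), while keeping all $m$-dependent nuisance factors harmless.
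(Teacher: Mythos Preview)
Your proposal is correct and follows the paper's proof almost exactly: single cutoff at $y=x^{1/4}$, Lemma~\ref{CDT-GRH} applied for $mn\le y$, and the same balance between the accumulated Chebotarev error $O(y\sqrt{x}\log x)$ and the density-series tail $O(x/(y\log x))$. The only minor divergence is in handling the tail $\sum_{mn>y}\pi_{\mc B_n}(x)$, where the paper simply bounds $\pi_{\mc B_n}(x)\ll\Li(x)/[K_{mn,n}:K]$ directly via Lemma~\ref{CDT-GRH} (together with $[K_{mn,n}:K]\gg_{K,G}\varphi(m)n^2$) rather than reverting to Brun--Titchmarsh and \eqref{eqlemma-pappa} as you suggest.
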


\begin{proof}
We start as in the proof of Theorem \ref{thm-divisible}. Recall \eqref{eq-B}, where $\mc B_n$ was defined in \eqref{def_Bn}. Applying first Lemma \ref{lem-key-id}, and then Lemma \ref{CDT-GRH} to the functions $\pi^1_{mn,dn}(x)$ (notice that \eqref{CDT-GRH-eq} also holds if we restrict to the primes of $K$ of degree $1$), setting $y:=x^{1/4}$ we obtain: 
\begin{align*}
\pi_{\mc P_m}(x) = &\ \Li(x)\varrho_{C,m}+O\Bigg(\sum_{n\leq y/m}\,\sum_{d\mid m}\sqrt{x}\log(mnx)\Bigg)+\bo{\frac{\sqrt{x}}{\log x}}  \\
& \ +O\Bigg(\Li(x)\sum_{\substack{n\mid m^\infty \\ nm>y}}\,\sum_{d\mid m}\frac{\mu(d)c(mn,dn)}{[F_{mn,dn}:K]}\Bigg) + O\Bigg(\sum_{\substack{n\mid m^\infty\\ nm> y}}\pi_{\mc B_n}(x)\Bigg) \\
=&\  \Li(x)\varrho_{C,m}+\bo{y\sqrt{x}\log x}+\bo{\frac{x\log\log x}{y\log x}}+O\Bigg(\frac{\Li(x)}{\varphi(m)}\sum_{n>y/m}\frac{1}{n^2}\Bigg).
\end{align*}
For the last error term, we used that 
\[ \pi_{\mc B_n}(x)\leq\pi_{K_{mn,n}/K,\id}(x) 
\ll_{K,G} \frac{1}{[K_{mn,n}:K]}\Li(x) \]
by Lemma \ref{CDT-GRH}, and the estimate $[K_{mn,n}:K]\gg_{K,G} \varphi(m)n^2$; see \cite[Theorem 3.1]{PS1}. All error terms are hence estimated by $x^{3/4}\log x$.
\end{proof}

\begin{cor}
Assume GRH.
With the setup of Theorem \ref{thm-kfree},  we have
\[
\pi_{\mc N_k}(x)=\Li(x)\beta_{C,k} +O_{F,K,G}\Bigg( \frac{x^{1-\frac{k-1}{4k+1}}}{\log x} \Bigg).
\]
Moreover, with the setup of Theorem \ref{thm-val}, we have 
\[
\pi_{\mc V}(x)=\Li(x)\gamma_{C,k,m} +O_{F,K,G}\big(\tau(k)x^{3/4}\log x\big).
\]
\end{cor}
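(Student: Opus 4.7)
The plan is to redo the inclusion--exclusion arguments that underlie Theorems \ref{thm-kfree} and \ref{thm-val}, but to feed in Theorem \ref{thm-divisGRH} in place of Theorem \ref{thm-divisible}, and to re-optimize the truncation parameter so as to exploit the sharper $x^{3/4}\log x$ error.

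The valuation case is immediate. Exactly as in the proof of Theorem \ref{thm-val}, inclusion--exclusion gives
\[
\pi_{\mc V}(x) \;=\; \sum_{f\mid k}\mu(f)\,\pi_{\mc P_{mf}}(x),
\]
a sum of at most $\tau(k)$ nonzero terms. Substituting $\pi_{\mc P_{mf}}(x) = \Li(x)\varrho_{C,mf}+O_{F,K,G}(x^{3/4}\log x)$ from Theorem \ref{thm-divisGRH} and collecting yields the stated expression with error $O_{F,K,G}(\tau(k)\,x^{3/4}\log x)$.

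The $k$-free case follows the skeleton of the proof of Theorem \ref{thm-kfree}: for a truncation parameter $z$ to be chosen later, split
\[
\pi_{\mc N_k}(x) \;=\; \sum_{m\leq z}\mu(m)\,\pi_{\mc P_{m^k}}^1(x)\;+\;\sum_{m>z}\mu(m)\,\pi_{\mc P_{m^k}}^1(x)\;+\;O\!\left(\frac{\sqrt x}{\log x}\right).
\]
Applying Theorem \ref{thm-divisGRH} to each of the $O(z)$ summands in the main sum produces $\Li(x)\sum_{m\leq z}\mu(m)\varrho_{C,m^k}$ together with a truncation error of order $z\,x^{3/4}\log x$. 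The missing tail of the series for $\beta_{C,k}$ is controlled by Proposition \ref{prop-convergent} and the estimate $1/\varphi(m^k)\ll \log\log m\cdot m^{-k}$, yielding $\Li(x)\sum_{m>z}|\varrho_{C,m^k}|\ll x/(z^{k-1}\log x)$ up to $\log\log$ factors; the tail $\sum_{m>z}\pi_{\mc P_{m^k}}^1(x)$ is bounded by a Brun--Titchmarsh estimate applied to primes with $\N\p\equiv 1\pmod{m^k}$ (the sum is empty for $m^k>x$), yielding the same order of magnitude. Choosing $z=x^{1/(4k+1)}$ balances the truncation error against these two tails and delivers the announced bound $O_{F,K,G}(x^{1-(k-1)/(4k+1)}/\log x)$.

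The main technical point I expect to have to check is that the implicit constant in Theorem \ref{thm-divisGRH} is sufficiently uniform in $m$: an inspection of its proof shows that the only residual $m$-dependence of the error is of order $\tau(m)/m$, so summing over $m\leq z$ inflates the truncation error by at most a harmless logarithmic factor, not by a power of $m$. With that in hand, the balancing of $z$ and the two tail estimates are routine bookkeeping.
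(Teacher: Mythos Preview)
Your proposal follows exactly the paper's route: for $\mc V$ you apply Theorem \ref{thm-divisGRH} term by term to the inclusion--exclusion sum $\sum_{f\mid k}\mu(f)\pi_{\mc P_{mf}}(x)$, and for $\mc N_k$ you rerun the proof of Theorem \ref{thm-kfree} with Theorem \ref{thm-divisGRH} in place of Theorem \ref{thm-divisible}, truncating at a power of $x$ and balancing. The paper's proof does precisely this (setting $z=x^{a}$), and your Brun--Titchmarsh treatment of the tail $\sum_{m>z}\pi_{\mc P_{m^k}}^1(x)$ is a valid alternative to the paper's appeal back to the divisibility theorem.

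One cosmetic point: your fixed choice $z=x^{1/(4k+1)}$ together with $\varrho_{C,m^k}\ll 1/\varphi(m^k)\ll (\log\log m)/m^{k}$ leaves a stray $\log\log x$ factor in the tail, so strictly you obtain $x^{1-(k-1)/(4k+1)}\log\log x/\log x$ rather than the announced bound. The paper removes this by reusing the $\eta$-trick from the proof of Theorem \ref{thm-kfree}: bound $\varrho_{C,m^k}\ll_\eta m^{-(k-\eta)}$, keep $z=x^a$ with $a<1/(4(k-\eta))$, and then choose $a,\eta$ so that $a(k-1-\eta)=(k-1)/(4k+1)$ (e.g.\ $\eta=1/10$ works for all $k\geq 2$). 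With that adjustment your argument yields the clean exponent. Your remark about uniformity in $m$ is correct but unnecessary: the error in Theorem \ref{thm-divisGRH} is already stated as depending only on $F,K,G$.
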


\begin{proof}
As for the first assertion, it is sufficient to follow the proof of Theorem \ref{thm-kfree},  making use of Theorem \ref{thm-divisGRH} instead of Theorem \ref{thm-divisible}. Here we set $z=x^a$ for some $0<a<1/4$. This yields
\[ \pi_{\mc N_k}(x)=\Li(x)\beta_{C,k} + O\left(\frac{x}{x^{a(k-1-\eta)}\log x}+x^{3/4+a}\log x  \right). \]
Taking $a<\frac{1}{4(k-\eta)}$ we can bound the second error term with the first one. Hence we may decrease the exponent $a(k-1-\eta)$ to $\frac{k-1}{4k+1}$ (e.g.\ taking $\eta=1/10$), resulting in the error term of the statement.
The second assertion is a direct consequence of Theorem \ref{thm-divisGRH}.
\end{proof}

\section{Numerical examples}\label{sect-ex}
In this section we provide several examples of densities computed with the formulas of Theorems \ref{thm-dens-formula}, \ref{cor-kfree} and \ref{cor-val}. All values have been verified with SageMath \cite{sage} by computing the approximated density that considers only primes up to a certain bound. In particular, we have tested these formulas for $K$ and $G$ as in the several numerical examples from \cite{PeruccaKummer, Debry, PeruccaWIN, pappa, pappa-sf} (notice that in \cite[Table 3, left side]{Debry} the density for the fifth and seventh entries should both read $121/960$).

Let $K$ be a number field and $G$ a finitely generated subgroup of $K^\times$. Recall the notation $\varrho_{m}$ introduced in  Theorem \ref{thm-dens-formula}. In Tables \ref{table-Q-div}--\ref{table-Q4-div} we provide several examples of densities $\varrho_{m}$.

\begin{table}[ht]
\centering
\caption{Examples of densities $\varrho_{m}$ with $K=\Q$}
\label{table-Q-div}
\scriptsize
\bgroup
\def\arraystretch{1.2}
\begin{tabular}{c||ccccccccc}
$G$  & $\varrho_{2}$ &  $\varrho_{3}$ & $\varrho_{4}$ & $\varrho_{6}$ & $\varrho_{9}$ & $\varrho_{12}$ & $\varrho_{16}$  & $\varrho_{27}$ \\ [0.5ex] 
\hline
$\group{2}$ & $17/24$ & $3/8$ & $5/12$ & $17/64$ & $1/8$ & $5/32$ & $1/24$  &$1/24$ \\
$\group{16}$ & $1/12$ & $3/8$ & $1/24$ & $1/32$ & $1/8$ & $1/64$ & $1/96$& $1/24$  \\
$\group{3}$ & $2/3$ & $3/8$ & $1/3$ & $5/16$ & $1/8$ & $1/16$ & $1/12$& $1/24$  \\
$\group{27}$ & $2/3$ & $1/8$ & $1/3$ & $5/48$ & $1/24$ & $1/48$ & $1/12$& $1/72$  \\
$\group{2,3}$ & $195/224$ & $6/13$ & $27/56$ & $333/728$ & $2/13$ & $3/14$ & $5/56$&  $2/39$ \\
$\group{16,27}$ & $75/112$ & $5/13$ & $75/224$ & $235/728$ & $5/39$ & $95/1456$ & $75/896$&  $5/117$ \\  
$\group{2,27,25}$ & $839/960$ & $37/80$ & $59/120$ & $17723/38400$
 & $37/240$ & $1073/4800$ & $11/120$  & $37/720$ \\ [0.5ex] 
\end{tabular}
\egroup
\end{table}

\begin{table}[ht]
\caption{Examples of densities $\varrho_{m}$ with $K=\Q(\zeta_3)$}
\label{table-Q3-div}
\centering\scriptsize
\bgroup
\def\arraystretch{1.2}
\begin{tabular}{c||ccccccccc}
$G$  & $\varrho_{2}$ &  $\varrho_{3}$ & $\varrho_{4}$ & $\varrho_{6}$ & $\varrho_{9}$ & $\varrho_{12}$ & $\varrho_{16}$ &  $\varrho_{27}$ \\ [0.5ex] 
\hline
$\group{2}$ & $17/24$ & $3/4$ & $5/12$ & $17/32$ & $1/4$ & $5/16$ & $1/24$  & $1/12$ \\
$\group{16}$ &  $1/12$ & $3/4$ & $1/24$ & $1/16$ & $1/4$ &  $1/32$& $1/96$  & $1/12$ \\
$\group{3}$ &  $5/6$ & $3/4$ & $1/6$ & $5/8$ & $1/4$ & $1/8$ & $1/24$  & $1/12$\\
$\group{27}$ & $5/6$  & $1/4$ & $1/6$ & $5/24$ & $1/12$ & $1/24$ & $1/24$  & $1/36$\\
$\group{2,3}$ & $111/112$  & $12/13$ & $13/28$ & $333/364$ & $4/13$ & $3/7$ & $3/56$  & $4/39$\\
$\group{16,27}$ & $47/56$  & $10/13$ & $19/112$ & $235/364$ & $10/39$ & $95/728$ & $19/448$  & $10/117$ \\
$\group{2,27,25}$ &  $479/480$ & $37/40$ & $29/60$ & $17723/19200$ & $37/120$ & $1073/2400$ & $7/120$  & $37/360$\\ [0.5ex] 
\end{tabular}
\egroup
\end{table}

\begin{table}[ht]
\caption{Examples of densities $\varrho_{m}$ with $K=\Q(\zeta_4,\sqrt{3})$}
\label{table-multi-div}
\centering\scriptsize
\bgroup
\def\arraystretch{1.2}
\begin{tabular}{c||ccccccccc}
$G$  & $\varrho_{2}$ &  $\varrho_{3}$ & $\varrho_{4}$ & $\varrho_{6}$ & $\varrho_{9}$ & $\varrho_{12}$ & $\varrho_{16}$ &  $\varrho_{27}$ \\ [0.5ex] 
\hline
$\group{2}$ & $11/12$ & $3/4$ & $5/6$ & $11/16$ & $1/4$ & $5/8$ & $1/12$ & $1/12$ \\
$\group{16}$ &  $1/6$ & $3/4$ & $1/12$ & $1/8$ & $1/4$ & $1/16$ & $1/48$  & $1/12$ \\
$\group{3}$ &  $2/3$ & $3/4$ & $1/3$ & $1/2$ &$1/4$  & $1/4$ & $1/12$  & $1/12$ \\
$\group{27}$ & $2/3$  & $1/4$ & $1/3$ & $1/6$ & $1/12$ & $1/12$ & $1/12$  & $1/36$ \\
$\group{2,3}$ & $55/56$  & $12/13$ & $13/14$ & $165/182$ & $4/13$ & $6/7$ & $3/28$  & $4/39$ \\
$\group{16,27}$ & $19/28$  & $10/13$ & $19/56$ & $95/182$ & $10/39$ & $95/364$ &  $19/224$ & $10/117$ \\
$\group{2,27,25}$ &  $239/240$  & $37/40$ & $29/30$ & $8843/9600$ & $37/120$ & $1073/1200$ &  $7/60$ & $37/360$ \\ [0.5ex] 
\end{tabular}
\egroup
\end{table}

\begin{table}[ht]
\caption{Examples of densities $\varrho_{m}$ with $K=\Q(\zeta_4)$}
\label{table-Q4-div}
\centering\scriptsize
\bgroup
\def\arraystretch{1.2}
\begin{tabular}{c||ccccccccc}
$G$  & $\varrho_{2}$ &  $\varrho_{3}$ & $\varrho_{4}$ & $\varrho_{6}$ & $\varrho_{9}$ & $\varrho_{12}$ & $\varrho_{16}$ &  $\varrho_{27}$ \\ [0.5ex] 
\hline
$\group{2\zeta_4}$ & $2/3$ & $3/8$ & $1/3$ & $1/4$ & $1/8$ & $1/8$ & $1/12$  & $1/24$ \\
$\group{16\zeta_4}$ & $47/48$  & $3/8$ & $23/24$ & $47/128$ & $1/8$ & $23/64$ & $1/48$  & $1/24$ \\
$\group{3\zeta_4}$ & $5/6$  & $3/8$ & $2/3$ & $11/32$ & $1/8$ & $5/16$ & $1/6$  & $1/24$ \\
$\group{27\zeta_4}$ &  $5/6$ & $1/8$ & $2/3$ & $11/96$ & $1/24$ & $5/48$ &  $1/6$ &  $1/72$\\
$\group{2\zeta_4,3\zeta_4}$ & $13/14$  & $6/13$ & $5/7$ & $165/364$ & $2/13$ & $3/7$ &  $5/28$ & $2/39$ \\
$\group{16\zeta_4,27}$ & $1791/1792$  & $5/13$ & $447/448$ & $4475/11648$ & $5/39$ & $1115/2912$ & $75/448$  & $5/117$ \\
$\group{2\zeta_4,27,25}$ &  $29/30$  & $37/80$ & $11/15$ & $259/600$ & $37/240$ & $259/1200$ & $11/60$  & $37/720$ \\ [0.5ex] 
\end{tabular}
\egroup
\end{table}

Recall the notation $\beta_{k}$ from Theorem \ref{cor-kfree} (which is the density of primes $\p$ of $K$ such that $\ord_\p(G)$ is $k$-free). Notice that, setting 
\begin{equation}\label{Akr}
A_{k,r}:=\prod_{\ell\text{ prime}}\Big(1-\frac{\ell^r-1}{(\ell-1)(\ell^{r+1}-1)\ell^{k-2}}\Big),
\end{equation}
a constant which only depends on the parameters $k\geq2$ and $r\geq1$, the infinite product in \eqref{eqcor-kfree0} is equal to 
\[ A_{k,r}\cdot\prod_{\ell\mid z}\Big(1-\frac{\ell^r-1}{(\ell-1)(\ell^{r+1}-1)\ell^{k-2}}\Big)^{-1}, \]
so $\beta_k$ can be expressed as rational multiple of $A_{k,r}$.
In Table \ref{table-constants} we show some values for  the $A_{k,r}$'s, approximated by considering only the primes $\ell$ up to $10^5$. In Tables \ref{table-Q3-kfree} and \ref{table-Q4-kfree} we provide some examples of densities $\beta_{k}$, expressed both as rational multiples of  $A_{k,r}$ and as approximated value.

\begin{table}[ht]
\caption{Examples of constants $A_{k,r}$ approximated ($\ell<10^5$)}
\label{table-constants}
\centering\scriptsize
\bgroup
\def\arraystretch{1.2}
\begin{tabular}{c||ccccccc}
$A_{k,r}$  &  $k=2$ &  $k=3$ &  $k=4$ &  $k=5$ &  $k=6$ &  $k=7$ &  $k=8$ \\ 
\hline
$r=1$ & $0.530712$  &  $0.788163$ &  $0.901926$ &  $0.953511$ & $0.977581$  & $0.989060$ & $0.994618$  \\
$r=2$ & $0.434934$  & $0.734313$  &  $0.875354$ &  $0.940597$ &  $0.971280$ & $0.985966$ &  $0.993091$ \\
$r=3$ &  $0.401045$ & $0.714103$  & $0.865118$  &  $0.935552$ & $0.968798$  & $0.984741$ & $0.992484$  \\
$r=4$ & $0.386687$  & $0.705354$  & $0.860624$  & $0.933316$  & $0.967691$  & $0.984192$ & $0.992211$  \\
$r=5$ & $0.380106$  & $0.701307$  &  $0.858528$ & $0.932267$  &  $0.967169$ & $0.983932$ & $0.992082$ 
\end{tabular}
\egroup
\end{table}

\begin{table}[ht]
\caption{Examples of densities $\beta_{k}$ over $K=\Q(\zeta_3)$}
\label{table-Q3-kfree}
\centering\scriptsize
\bgroup
\def\arraystretch{1.4}
\begin{tabular}{c||cccc}
$G$  & $\beta_{2}$ & $\beta_{3}$ & $\beta_{4}$ & $\beta_{5}$  \\ 
\hline 
$\group{2}$ & $\frac{3}{4} A_{2,1}\approx0.398$ & $\frac{121}{115} A_{3,1}\approx0.829$ & $\frac{805}{781} A_{4,1}\approx0.930$ & $\frac{5029}{4945} A_{5,1}\approx0.970$    \\
$\group{16}$ & $\frac{69}{56} A_{2,1}\approx0.654$  & $\frac{517}{460} A_{3,1}\approx0.886$ & $\frac{3325}{3124} A_{4,1}\approx0.960$ &  $\frac{20437}{19780} A_{5,1}\approx0.985$  \\
$\group{3}$ & $\frac{15}{14} A_{2,1}\approx0.569$ & $\frac{121}{115} A_{3,1}\approx0.829$ & $\frac{805}{781} A_{4,1}\approx0.930$ & $\frac{5029}{4945} A_{5,1}\approx0.970$    \\
$\group{27}$ & $\frac{55}{42} A_{2,1}\approx0.695$  & $\frac{77}{69} A_{3,1}\approx0.880$ & $\frac{2461}{2343} A_{4,1}\approx0.947$ &  $\frac{15181}{14835} A_{5,1}\approx0.976$  \\
$\group{2,3}$ &  $\frac{135}{176} A_{2,2}\approx0.334$  & $\frac{875}{814} A_{3,2}\approx0.789$ & $\frac{5989}{5750} A_{4,2}\approx0.912$ & $\frac{37823}{36994} A_{5,2}\approx0.962$   \\
$\group{16,27}$ & $\frac{899}{704} A_{2,2}\approx0.555$  & $\frac{21935}{19536} A_{3,2}\approx0.824$ & $\frac{48763}{46000} A_{4,2}\approx0.928$ &  $\frac{914711}{887856} A_{5,2}\approx0.969$  \\
$\group{2,27,25}$ & $\frac{95201}{119193} A_{2,3}$   & $\frac{105751169}{96766014} A_{3,3}$  & $\frac{524265887}{500045142} A_{4,3}$  &  $\frac{116376274169}{113496822354} A_{5,3}$   \\[-0.8ex]
& $\approx0.320$ & $\approx0.780$ & $\approx0.907$ & $\approx0.959$ 
\end{tabular}
\egroup
\end{table}

\begin{table}[ht]
\caption{Examples of densities $\beta_{k}$ over $K=\Q(\zeta_4)$}
\label{table-Q4-kfree}
\centering\scriptsize
\bgroup
\def\arraystretch{1.4}
\begin{tabular}{c||cccc}
$G$  & $\beta_{2}$ & $\beta_{3}$ & $\beta_{4}$ & $\beta_{5}$  \\ [0.5ex] 
\hline 
$\group{2}$ & $\frac{1}{4} A_{2,1}\approx0.133$ & $A_{3,1}\approx0.788$ & $A_{4,1}\approx0.902$ & $A_{5,1}\approx0.953$  \\
$\group{16}$ & $\frac{11}{8} A_{2,1}\approx0.730$ & $\frac{23}{20} A_{3,1}\approx0.906$ & $\frac{47}{44} A_{4,1}\approx0.963$ & $\frac{95}{92} A_{5,1}\approx0.985$  \\
$\group{3}$ & $\frac{3}{7} A_{2,1}\approx0.227$ & $\frac{91}{115} A_{3,1}\approx0.624$ & $\frac{709}{781} A_{4,1}\approx0.819$ & $\frac{4729}{4945} A_{5,1}\approx0.912$  \\
$\group{27}$ & $\frac{11}{21} A_{2,1}\approx0.278$ & $\frac{283}{345} A_{3,1}\approx0.647$ & $\frac{2149}{2343} A_{4,1}\approx0.827$ &  $\frac{331}{345} A_{5,1}\approx0.915$ \\
$\group{2,3}$ & $\frac{9}{176} A_{2,2}\approx0.0222$ & $\frac{329}{407} A_{3,2}\approx0.594$ & $\frac{2641}{2875} A_{4,2}\approx0.804$ & $\frac{17795}{18497} A_{5,2}\approx0.905$   \\
$\group{16,27}$ & $\frac{1073}{2112} A_{2,2}\approx0.221$ & $\frac{5501}{6512} A_{3,2}\approx0.620$ & $\frac{128873}{138000} A_{4,2}\approx0.817$ & $\frac{286741}{295952} A_{5,2}\approx0.911$  \\
$\group{2,27,25}$ & $\frac{23323}{953544} A_{2,3}$ & $\frac{79247549}{96766014} A_{3,3}$ & $\frac{3234551969}{3500315994} A_{4,3}$ & $\frac{109490052089}{113496822354} A_{5,3}$  \\[-0.8ex]
 & $\approx0.00981$  & $\approx0.585$ & $\approx0.799$ & $\approx0.903$ \\
\end{tabular}
\egroup
\end{table}

Finally, recall the notation $\gamma_{k,m}$ from Corollary \ref{cor-val} (which is the density of primes $\p$ of $K$ such that $\ord_\p(G)$ has $\ell$-adic valuation equal to $v_\ell(m)$ for every prime $\ell\mid k$). In Table \ref{table-Q5val} we provide some examples of these densities.

\begin{table}[ht]
\caption{Examples of densities $\gamma_{k,m}$ over $K=\Q(\sqrt{-5})$}
\label{table-Q5val}
\centering\scriptsize
\bgroup
\def\arraystretch{1.2}
\begin{tabular}{c||ccccccc}
$G$  & $\gamma_{6,1}$ & $\gamma_{6,2}$ &  $\gamma_{6,3}$ & $\gamma_{6,4}$ & $\gamma_{6,6}$ & $\gamma_{6,9}$ & $\gamma_{6,12}$   \\ [0.2ex] 
\hline
$\group{2}$ & $35/192$ & $35/192$ & $7/96$ & $5/24$ & $7/96$ & $7/288$ & $1/12$   \\
$\group{16}$ &  $55/96$ & $5/192$ & $11/48$ & $5/384$ & $1/96$ & $11/144$ & $1/192$   \\
$\group{3}$ & $13/48$ & $1/12$ & $1/24$ & $13/96$ & $1/6$ & $1/72$ & $1/48$  \\
$\group{27}$ & $5/16$ &  $1/4$ & $1/72$ & $5/32$ & $1/18$ & $1/216$ & $1/144$   \\
$\group{2,3}$ & $365/2912$ & $423/2912$ & $1/364$ & $101/728$ & $59/364$ & $1/1092$ & $10/91$  \\
$\group{16,27}$ & $391/1456$ & $225/2912$ & $15/364$ & $785/5824$ & $125/728$ & $5/364$ & $95/4368$  \\
$\group{2,27,25}$ & $801/6400$ & $927/6400$ & $37/28800$ & $443/3200$ & $4699/28800$ & $37/86400$ & $1591/14400$   \\ [0.2ex] 
\end{tabular}
\egroup
\end{table}
\FloatBarrier

\section*{Acknowledgments}
The author would like to thank Francesco Pappalardi for suggesting the problem and for helpful discussions (Remark \ref{rem-pappa} is due to him). Also many thanks to Antonella Perucca for her continuous support and valuable feedback. I thank the reviewers for many helpful comments, and, in particular, one of them for suggesting to use the results of \cite{TZ}, leading to an improvement of the error terms in Theorems \ref{thm-divisible} and \ref{thm-kfree}.  I thank Pieter Moree for some useful remarks. The Sage code used for the examples has been partly adapted from Sebastiano Tronto's code \emph{kummer-degrees} available on his GitHub page.

\end{document}